\newtheorem{theorem}{Theorem}[]
\newtheorem{assump}[]{Assumption}
\newtheorem{lemma}[subsection]{Lemma}
\theoremstyle{definition}
\newtheorem{example}[]{Example}
\theoremstyle{remark}
\newcommand{\mup}{\boldsymbol{\mu}}
\newcommand{\ub}{\textbf{u}}
\newcommand{\vb}{\textbf{v}}
\newcommand{\xib}{\boldsymbol{\xi}}
\newcommand{\x}{\mathbf{x}}
\newcommand{\y}{\mathbf{y}}
\newcommand{\z}{\mathbf{z}}
\DeclareMathOperator*{\esssup}{ess\,sup}
\newcommand{\cond}{\boldsymbol{\sigma}}
\newcommand{\adve}{\boldsymbol{b}}
\newcommand{\spacedim}{d}
\newcommand{\advep}{\adve_{\mup}}
\newcommand{\forcep}{f_{\mup}}
\newcommand{\condp}{\cond_{\mup}}
\newcommand{\gp}{g_{\mup}}
\newcommand{\up}{u_{\mup}}
\newcommand{\solmanifold}{\mathcal{S}}
\newcommand{\pdim}{p}
\newcommand{\solmanifoldh}{\solmanifold^{h}}
\newcommand{\fomdim}{N_{h}}
\newcommand{\ufom}{\ub^{h}}
\newcommand{\ufomp}{\ufom_{\mup}}
\newcommand{\ufompi}{\ufom_{\mup_{i}}}
\newcommand{\Nu}{\mathcal{V}}
\newcommand{\nub}{\boldsymbol{\nu}}
\newcommand{\diam}{\text{diam}}
\newcommand{\mindim}{n_{\min}}
\newcommand{\rdim}{n}
\newcommand{\urom}{\ub^{\rdim}}
\newcommand{\uromp}{\ub^{\rdim}_{\mup}}
\newcommand{\urompi}{\ub^{\rdim}_{\mup_{i}}}
\newcommand{\ntrain}{N_{\text{train}}}
\newcommand{\ntest}{N_{\text{test}}}
\newcommand{\weight}{\mathbf{W}}
\newcommand{\bias}{\mathbf{b}}
\newcommand{\dlrom}{\Phi}
\newcommand{\decoder}{\Psi}
\newcommand{\nndecoder}{\Psi}
\newcommand{\nnencoder}{\Psi'}
\newcommand{\nnphi}{\phi}
\def\tildedecoder{\tilde{\decoder}}
\def\tildeencoder{\tilde{\decoder}'}
\newcommand{\encoder}{\decoder'}
\newcommand{\transcoder}{\encoder_{\mu}}
\newcommand{\mui}{\mup_{i}}
\newcommand{\ui}{\ufom_{\mui}}
\newcommand{\uri}{\urom_{\mui}}
\newcommand{\lossf}{\text{Loss}}
\newcommand{\loss}{\mathcal{L}}
\newcommand{\review}[1]{#1}
\newcommand{\final}[1]{#1}
\begin{document}

\title[]{A Deep Learning approach to Reduced Order Modelling of Parameter dependent Partial Differential Equations}

\author{Nicola R. Franco}
\address{MOX, Department of Mathematics, Politecnico di Milano, Italy}
\email{nicolarares.franco@polimi.it}

\author{Andrea Manzoni}
\address{MOX, Department of Mathematics, Politecnico di Milano, Italy}
\email{andrea1.manzoni@polimi.it}

\author{Paolo Zunino}
\address{MOX, Department of Mathematics, Politecnico di Milano, Italy}
\email{paolo.zunino@polimi.it}


\date{}

\begin{abstract}
Within the framework of parameter dependent PDEs, we develop a constructive approach based on Deep Neural Networks for the efficient approximation of the parameter-to-solution map. The research is motivated by the limitations and drawbacks of state-of-the-art algorithms, such as the Reduced Basis method, when addressing problems that show a slow decay in the Kolmogorov $n$-width. Our work is based on the use of deep autoencoders, which we employ for encoding and decoding a high fidelity approximation of the solution manifold.
\final{To provide guidelines for the design of deep autoencoders}, we consider a nonlinear version of the Kolmogorov $n$-width over which we base the concept of a minimal latent dimension. We show that the latter is intimately related to the topological properties of the solution manifold, and we provide theoretical results with particular emphasis on second order elliptic PDEs, \final{characterizing the minimal dimension and the approximation errors of the proposed approach}.
\final{The theory presented is further supported by numerical experiments, where we compare the proposed approach} with classical POD-Galerkin reduced order models. In particular, we consider parametrized advection-diffusion PDEs, and we test the methodology in the presence of strong transport fields, singular terms and stochastic coefficients.
\end{abstract}
\maketitle

\section*{Introduction}
\label{sec:intro}
In many areas of science, such as physics, biology and engineering, phenomena are modeled in terms of
Partial Differential Equations (PDEs) that exhibit dependence on one or multiple parameters. As an example, consider the stationary advection-diffusion equation below,
\begin{equation*}
\label{advection:diffusion}
\begin{cases}
-\textnormal{div}(\condp \nabla u) + \advep\cdot\nabla u = \forcep & \textnormal{in }\Omega,\\
u=\gp & \textnormal{on }\partial\Omega,
\end{cases}
\end{equation*}
where $\Omega\subset\mathbb{R}^{\spacedim}$ is a bounded domain and $\mup$ a vector parameter taking values in a suitable parameter space $\Theta\subset\mathbb{R}^{\pdim}$. For each $\mup\in\Theta$, we assume the above to admit a unique solution $u_{\mup}$, to be sought within a given Hilbert space $(V, ||\cdot||)$.
\newline\indent
In some cases, one is not interested in computing the PDE solution for a single fixed $\mup\in\Theta$, but rather for an ensemble of parameter values. In general, this corresponds to exploring the so-called \textit{solution manifold} $\solmanifold:=\{\up\}_{\mup\in\Theta}$ \cite{fink,lassila}. The map $\mup\to\up$ is known under many equivalent names such as the \textit{parametric map} \cite{schwab}, the parameter-to-state map \cite{hoffmann} or the solution map \cite{ohlberger}.
Approximating the parametric map in a highly-efficient way is a challenging task that can be encountered in several contexts, from optimal control problems with parametric PDEs constraints \cite{binev} to multiscale fluid mechanics \cite{knezevic}, or Bayesian inversion and uncertainty quantification \cite{ghattas}. In all these cases, the main drawback is represented by the computational cost entailed by traditional PDE solvers. In fact, despite their accuracy, each query of a numerical scheme such as the Finite Element Method (FEM) implies a computational cost that easily becomes unsustainable in many query applications, where computations are supposed to be carried out within a very short amount of time.
\newline\indent
One possibility is then to replace Full Order Models (FOMs) with cheaper surrogate models, namely Reduced Order Models (ROMs). ROMs originate from the need of alleviating the computational burden of FOMs at the price of a negligible compromise in terms of accuracy. During the last decades, several successful model reduction techniques have been developed, such as the Reduced Basis method \cite{quarteroni} and others. However, the majority of these ROMs heavily relies on linear projection techniques, thus limiting significantly the spectrum of possible applications. Indeed, ROMs based on linear reduction methods encounter substantial difficulties whenever the solution manifold has a so-called Kolmogorov $\rdim$-width \cite{kolmogorov} that decays slowly with $\rdim$.
The Kolmogorov $\rdim$-width is a quantity that measures the degree of accuracy by which a set can be approximated using linear subspaces of dimension $\rdim$, namely
\begin{equation}
\label{eq:kolmogorov}
d_{\rdim}(\solmanifold) := \inf_{\substack{\\V_{\rdim}\subset V, \\\text{dim}(V_{\rdim})=\rdim}}\sup_{u\in\solmanifold}\;\inf_{\vb\in V_{\rdim}} ||u-v||.
\end{equation}
If $d_{\rdim}(\solmanifold)$ decays slowly with $\rdim$, then projection-based ROMs can reach meaningful accuracies only for large values of $n$, which in turn leads to expensive models. We point out that this phenomenon is far from being uncommon. As a matter of fact, the slow decay on $d_{\rdim}(\solmanifold)$ is typical of time-dependent transport-dominated problems, even under fairly simple circumstances \cite{greif,ohlberger}. The same is also true for stationary and purely diffusive problems, provided that the parameters enter in a highly nonlinear and possibly spatially localized way. \final{The interested reader can find a simple yet remarkable example of this fact in the Appendix, Example \ref{example:kolmogorov-decay}}.

In order to tackle these drawbacks, we propose a novel approach based on Deep Neural Networks (DNNs) \cite{schmidhuber} that naturally accounts for possible nonlinearities in the solution manifold. Our construction is mostly inspired by the recent advancements in nonlinear approximation theory, e.g. \cite{cohen2,daubechies,schwab}, and the increasing use of deep learning techniques for parametrized PDEs and \final{operator learning}, as in \cite{hesthaven3,fresca,kutyniok,karniadakis}.

\subsection*{Our contribution}
\label{sec:our}
The purpose of the present work is to provide alternative ROM techniques for parametrized PDEs that are able to overcome the drawbacks implied by the slow decay of the Kolmogorov $\rdim$-width. In particular, we focus on nonintrusive ROMs where the solution map is approximated by a deep neural network $\dlrom$. This idea has been recently investigated both theoretically, as in \cite{kutyniok, mishra3, schwab}, and practically, e.g. \cite{fresca,geist}. By now, the drawbacks posed by this approach are mainly practical: it is often unclear how the network architecture should be designed and which optimization strategies are better suited for the purpose. Also, we lack the understanding of the possible ways the nonlinearities in the DNN should be exploited in order to make the most out of it. Here, we wish to partially answer these questions and provide a constructive way of designing such $\Phi$. 

The key idea is to break the problem into two parts. First, we seek for a low-dimensional representation of the solution manifold, which we obtain by training a deep autoencoder \cite{hinton}, $\nndecoder\circ\nnencoder$.
The encoder, $\nnencoder$, is used to map the solution manifold into a reduced feature space $\mathbb{R}^{\rdim}$, while the decoder serves for the reconstruction task. Here we see a clear analogy with the Nonlinear Kolmogorov $\rdim$-width as defined in DeVore et al. \cite{devore}. There, the authors define
\begin{equation*}
\delta_{\rdim}(\solmanifold) 
:= 
\inf_{
    \substack{
    \encoder\in\mathcal{C}(\solmanifold,\;\mathbb{R}^{n})\\
    \decoder\in\mathcal{C}(\mathbb{R}^{n},\;V)}}
    \;\;\sup_{u\in\solmanifold}\;\;||u-\decoder(\encoder(u))||,
\end{equation*}
as a nonlinear counterpart of $d_{\rdim}(\solmanifold)$. In light of this, we introduce the concept of \textit{minimal latent dimension}, denoted as $\mindim(\solmanifold)$, which we define as the smallest $\rdim$ for which $\delta_{\rdim}(\solmanifold)=0$. By choosing this particular $\mindim(\solmanifold)$ as latent dimension for the autoencoder, we are then able to perform a significant model reduction.

Once the autoencoder has been trained, we exploit the encoder $\nnencoder$ in order to represent each solution $\up$ through a low-dimensional vector $\uromp\in\mathbb{R}^{n}$. We then train a third network $\nnphi:\Theta\to\mathbb{R}^{n}$ to learn the \textit{reduced map} $\mup\to\uromp$. In this way, by connecting the architectures of $\nnphi$ and $\nndecoder$ we obtain the complete model, $\dlrom:=\nndecoder\circ\nnphi$, which we later term as DL-ROM (Deep Learning based Reduced Order Model, in the same spirit of previous works \cite{fresca,fresca2}).
\newline
\newline
The novelty of our contribution is twofold. First, we develop a new constructive way of using neural networks to approximate the solution map and we test it on some numerical examples. Second, we prove theoretical results that motivate the choice of the ROM dimension. Indeed, despite the popularity of autoencoders, e.g. \cite{fresca,lee,mucke,zhu}, the choice of the latent dimension $n$ is often handled by trial and error. In contrast, we establish precise bounds on $n$ thanks to a rigorous theoretical analysis. 

More precisely, in Theorems 1 and 2, we investigate the link between the minimal latent dimension $\mindim(\solmanifold)$ and the topological properties of $\solmanifold$. In Theorem 3 we explicitly bound $\mindim(\solmanifold)$ in terms of the dimensionality of the parameter space. In particular, we show that $\mindim(\solmanifold)\le 2p+1$ as soon as the parametric map is Lipschitz continuous. \review{The theory is then applied to the case of second order elliptic PDEs, in Theorem 4, where we demonstrate how the parameters directly affect the value of the minimal latent dimension. \final{Finally, in Theorem 5, we bound the model complexity in terms of the ROM accuracy, deriving suitable error estimates that are later confirmed experimentally.}}
\newline
\newline
The paper is organized as follows. In Section \ref{sec:background} we introduce our general framework and briefly recall the driving ideas of linear reduction.
In Section \ref{sec:nonlinear} we move to the nonlinear case, where we establish a solid theoretical background for the construction of the DL-ROM, with particular emphasis on minimal representations and parametrized PDEs. In Section \ref{sec:learning} we dive into the details of our deep learning approach, thereby discussing the general construction and \review{its numerical properties}. In Section \ref{sec:experiments} we present some numerical results and assess the proposed methodology. Finally, to make the paper self-contained, auxiliary mathematical results are reported in the Appendix.
\newpage

\section{General background}
\label{sec:background}
Within the present Section we formally introduce the problem of reduced order modelling for parametrized PDEs. For later comparison, we also take the chance to recall the linear reduction technique known as Principal Orthogonal Decomposition \cite{luo,quarteroni}. In the remainder of the paper, we make use of elementary notions coming from the areas of Functional Analysis, Numerical Analysis and Topology. We respectively refer to \cite{adams,evans}, \cite{quarteroni-fem} and \cite{hocking}.

\subsection{Reduced Order Models for parametrized PDEs}
\label{sec:roms}
We are given a parameter space $\Theta\subset\mathbb{R}^\pdim$, a Hilbert state space $(V, ||\cdot||)$ and \final{parameter dependent operators $a_{\mup}: V\times V\to\mathbb{R}$ and $f_{\mup}: V\to\mathbb{R}$. For each $\mup\in\Theta$ we consider the variational problem}
\begin{equation}
\label{eq:ppde}
u\in V:\quad a_{\mup}(u, v)=f_{\mup}(v)\quad\forall v\in V.
\end{equation}
We assume the problem to be well-posed, so that for each $\mup\in\Theta$ there exists a unique solution $u=\up\in V$.
Our interest is to define a ROM that is able to approximate the parametric map $\mup\to\up$ efficiently.
In general, the workflow goes as follows. First, one chooses a FOM, which we here assume to be based on Galerkin projections. This corresponds to fixing a so-called \textit{high-fidelity discretization}, that is a finite dimensional subspace $V_{h}\subset V$, dim$(V_{h})=\fomdim$, used to replace the original trial space. Having chosen a basis for $V_{h}$, say $\{\varphi_{i}\}_{i=1}^{\fomdim}$, for each $\mup\in\Theta$ one turns equation \eqref{eq:ppde} into the (discrete) problem
\begin{equation}
    \label{eq:disc}
    \ufomp=[\textrm{u}_{\mup,1}^{h},\dots,\textrm{u}_{\mup,\fomdim}^{h}]\in\mathbb{R}^{\fomdim}:\quad a_{\mup}\left(\sum_{i=1}^{n}\textrm{u}_{\mup,i}^{h}\varphi_{i},\; v\right)=f_{\mup}(v)\quad\forall v\in V_{h}.
\end{equation}
The main purpose of the high-fidelity discretization is to reframe the original problem within a finite dimensional setting, without particular care on the computational cost (for now). Regarding the choice of $V_{h}$, we make the following assumption.

\begin{assump}
\label{assump:high-fidelity}
For any $\varepsilon>0$ there exists $V_{h}:=\textnormal{span}\{\varphi_{i}\}_{i}^{\fomdim}\subset V$ such that
$$\sup_{\mup\in\Theta}\left|\left|\up-\sum_{i=1}^{\fomdim}\textnormal{$\textrm{u}_{\mup,i}^{h}$}\varphi_{i}\right|\right|<\varepsilon$$
that is, the FOM accuracy can be bounded independently on the value of $\mup\in\Theta$.
\end{assump}
\noindent\newline
The above is a very common assumption in the literature, see e.g. \cite{kutyniok}, that allows us to formally replace $V$ with $V_{h}$. The objective now becomes that of learning the map $\mup\to\ufomp$ in a way that reduces the FOM cost. In particular, the construction of the ROM consists in finding a suitable map $\Phi:\mathbb{R}^{p}\to\mathbb{R}^{\fomdim}$ for which $\Phi(\mup)\approx\ufomp$. To do so, the common practice is to make extensive use of the FOM during a preliminary offline stage, which results in the collection of the so-called \textit{snapshots}, $\{\mup_{i}, \ufompi\}_{i}\subset\mathbb{R}^{p}\times\mathbb{R}^{\fomdim}$. These snapshots are then processed in order to build the ROM. In this sense, the identification of $\Phi$ can be seen as a problem of Statistical Learning, as argued in \cite{kutyniok}.
The way $\Phi$ is defined from the data is what characterizes each ROM, its efficiency and accuracy.

\subsection{Methods based on linear projections}
\label{sec:pod}
Many state-of-the-art ROMs are built upon the use of linear reduction techniques, which are known to work particularly well for second order elliptic PDEs with affine coefficients \cite{cohen}. The idea is the following. Having fixed a high-fidelity discretization, one considers the (discretized) solution manifold $\solmanifoldh:=\{\ufomp\}_{\mup\in\Theta}$ and tries to approximate it using linear subspaces. This translates into fixing a reduced dimension $\rdim\in\mathbb{N}$ and searching for the orthonormal matrix $\mathbf{V}\in\mathbb{R}^{\fomdim\times\rdim}$ that minimizes the errors $||\ufomp - \mathbf{V}\mathbf{V}^{T}\ufomp||$. In practice, the identification of such $\mathbf{V}$ is done empirically by exploiting the aforementioned snapshots $\{\mup_{i}, \ufompi\}_{i=1}^{N}$, which is often achieved through the so-called Principal Orthogonal Decomposition (POD). In short, this latter approach considers all the FOM snapshots as columns of a matrix, $\mathbf{U}:=[\ub^{h}_{\mup_{1}},,\ub^{h}_{\mup_{N}}]$ and computes its singular value decomposition
$$\mathbf{U} = \tilde{\mathbf{U}}\Sigma\mathbf{W}^{T}$$
where $\Sigma=$diag$(\sigma_{1},..,\sigma_{N})$ with $\sigma_{1}\ge...\ge\sigma_{N}\ge0$.
Then, in the POD approach, $\mathbf{V}$ is defined by extracting the first $n$ columns of $\tilde{\mathbf{U}}$. It is well-known that this choice of $\mathbf{V}$ is optimal -in some sense- over the training sample $\{\ufompi\}_{i=1}^{N}$. We also mention that, while this version of the POD considers $\mathbb{R}^{\fomdim}$ as a Euclidean space, slight modifications allow to account for different (e.g. energy) norms.

Once $\mathbf{V}$ has been built, the solution manifold is projected onto the reduced space $\mathbb{R}^{n}$, and each FOM solution is associated with the corresponding low-dimensional representation, $\uromp:=\mathbf{V}^{T}\ufomp$.
\final{To be operational, the ROM then needs to implement a suitable algorithm that approximates the correspondence $\mup\to\uromp$. If we represent the latter as a map $\phi:\Theta\to\mathbb{R}^{n}$, then the ROM approximation of high-fidelity solutions can be written as $\Phi(\mup):=\mathbf{V}\phi(\mup)\approx\ufomp$.
Within the literature, this step has been handled in multiple ways. In the Reduced Basis method \cite{lassila,quarteroni}, particularly in the so-called POD-Galerkin method, $\phi$ is defined intrusively by projecting and solving equation \eqref{eq:disc} onto span$(\mathbf{V})$.} Depending on the parametrization and on the type of PDE, this procedure may turn to be too expensive, which is why several alternatives have been proposed, see e.g. \cite{barrault,manzoni,shah}. \final{Nonintrusive approaches for defining $\phi$ include Gaussian process regression \cite{hesthaven1}, polynomial chaos expansions \cite{jiang}, neural networks \cite{hesthaven3, hesthaven2} and others}.

Nevertheless, because of the linear approximation, these ROMs encounter substantial difficulties as soon as $\solmanifold$ has a Kolmogorov $n$-width, see equation \eqref{eq:kolmogorov}, that decays slowly. In fact,
$$ \sup_{\mup\in\Theta}||\ufomp-\mathbf{V}\phi(\mup)||\;\ge\;\sup_{\mup\in\Theta}||\ufomp-\mathbf{V}\mathbf{V}^{T}\ufomp||
\;\ge\;d_{n}(\solmanifoldh)\;\ge\; d_{n}(\solmanifold)-\varepsilon,$$
where $\varepsilon>0$ is the accuracy of the high-fidelity discretization. Therefore, if $d_{n}(\solmanifold)$ decays slowly, one may be forced to consider large values of $\rdim$, which in turn makes $\phi$ more expensive and harder to identify.
As we argue in the next Section, one possible solution to this problem is given by nonlinear reduction techniques. However, despite this being a promising direction, only a few steps have been made so far, e.g. \cite{bhattacharjee,fresca,lee}.

\section{Nonlinear Dimensionality Reduction}
\label{sec:nonlinear}

In the present Section we formalize the idea of using nonlinear reduction techniques for the compression of the solution manifold. We start by introducing all concepts and results in an abstract fashion. Only at the end, Section \ref{sec:nonlinear-param-pdes}, we rephrase the content in terms of parametrized PDEs.

\subsection{Nonlinear Kolmogorov $\rdim$-width and minimal latent dimension}
\label{sec:nl-kw}

Within this Section, we consider an abstract setting where $(V, ||\cdot||)$ is a Hilbert space and $\solmanifold\subset V$ a generic subset. In particular, $\solmanifold$ needs not to be the solution manifold of a parametrized PDE and the theory is presented regardless of a possible discretization. We address the problem of finding a low-dimensional representation of $\solmanifold$ while minimizing the reconstruction error. 

When $V$ is finite-dimensional, the linear reduction described in Section \ref{sec:pod} performs an encoding of $\solmanifold$ via the map $\ub\to\mathbf{V}^{T}\ub=:\ub^{n}\in\mathbb{R}^{\rdim}$, where $\rdim$ is the reduced dimension; the set is then recovered through $\ub^{n}\to \mathbf{V}\ub^{n}\approx\ub$. Therefore, a possible generalization to the nonlinear case is to substitute $\mathbf{V}^{T}$ with some \textit{encoder} $\encoder:\solmanifold\to\mathbb{R}^{\rdim}$ and $\mathbf{V}$ with a \textit{decoder} $\decoder:\mathbb{R}^{\rdim}\to V$. Of note, this is also an approach that easily extends to infinite-dimensional settings. Depending on the restrictions that we impose on $\encoder$ and $\decoder$, different reconstruction accuracies can be obtained.

Here, we only require $\encoder$ and $\decoder$ to be continuous. This, naturally gives rise to the optimization problem below,
\begin{equation}
\label{eq:nonlinear-kolmogorov}
\delta_{\rdim}(\solmanifold) 
:= 
\inf_{
    \substack{
    \encoder\in\mathcal{C}(\solmanifold,\;\mathbb{R}^{n})\\
    \decoder\in\mathcal{C}(\mathbb{R}^{n},\;V)}}
    \;\;\sup_{u\in\solmanifold}\;\;||u-\decoder(\encoder(u))||,
\end{equation}
where $\mathcal{C}(X, Y)$ denotes the collection of all continuous maps from $X$ to $Y$. 
As we mentioned in the Introduction, the above corresponds to the (continuous) Nonlinear Kolmogorov $n$-width as defined in \cite{devore}. It is clear that $d_{\rdim}(\solmanifold)\ge\delta_{\rdim}(\solmanifold)$. Also, $\delta_{\rdim}(\solmanifold)$ is nonincreasing in $\rdim$, which reflects the fact that better approximations can be achieved in higher dimensional spaces. However, in the context of reduced order modelling, smaller values of $\rdim$ are often preferable, as they allow for less expensive models.
In this sense, whenever there exists a smallest dimension $\mindim$ that allows for an arbitrarily accurate reduction, i.e. $\delta_{\mindim}(\solmanifold)=0$, we may want to focus on that one. For this reason, we introduce the notation
$$\mindim(\solmanifold):=\min\{\rdim\in\mathbb{N}\;|\; \delta_{\rdim}(\solmanifold)=0\},\vspace{1em}$$ were we adopt the convention $\min\emptyset=+\infty$. We refer to $\mindim(\solmanifold)$ as to the minimal latent dimension of $\solmanifold$. Clearly, when $V\cong\mathbb{R}^{N_{h}}$ is finite-dimensional, the above definition is of interest only if $\mindim(\solmanifold)\ll N_{h}$. Nevertheless, as we will see below, this is always the case as soon as $\solmanifold$ has an intrinsic low-dimensional structure. 
Indeed, the value of $\mindim(\solmanifold)$ is strongly related to the topological properties of $\solmanifold$. For instance, in the case of compact sets, it is invariant under bicontinuous transformations. More precisely, we have the following.

\begin{theorem}
\label{theorem:homeom}
Let $V$ and $W$ be two Hilbert spaces. Let $\solmanifold\subset V$ and $\mathcal{M}\subset W$ be two compact subsets. If $\solmanifold$ and $\mathcal{M}$ are homeomorphic, then $\mindim(\solmanifold) = \mindim(\mathcal{M})$.
\end{theorem}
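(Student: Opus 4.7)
The strategy is to show that both containments $\mindim(\solmanifold) \le \mindim(\mathcal{M})$ and the reverse hold by transporting encoder/decoder pairs through the given homeomorphism; by symmetry it suffices to prove one direction. So let $\eta: \solmanifold \to \mathcal{M}$ be a homeomorphism (with $\solmanifold, \mathcal{M}$ compact, hence $\eta^{-1}$ is automatically continuous), and suppose $\delta_{\rdim}(\solmanifold)=0$. The natural attempt is, given an approximating pair $(\encoder, \decoder)$ for $\solmanifold$, to define
\[
\tilde\encoder := \encoder \circ \eta^{-1} : \mathcal{M} \to \mathbb{R}^{\rdim}, \qquad \tilde\decoder := \eta \circ \decoder : \mathbb{R}^{\rdim} \to W.
\]
However, $\decoder$ takes values in $V$ and not in $\solmanifold$, so the composition $\eta \circ \decoder$ is a priori undefined. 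This is the main obstacle and the step where the compactness and the Hilbert-space structure must be invoked.

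To overcome it, the plan is to first extend $\eta$ continuously from $\solmanifold$ to all of $V$. Because $\solmanifold$ is compact hence closed in the metric space $V$, and because $W$ is a (locally convex) Banach space, Dugundji's extension theorem yields a continuous map $E : V \to W$ with $E|_{\solmanifold} = \eta$. With this in hand, redefine $\tilde\decoder := E \circ \decoder$. Then $\tilde\encoder$ and $\tilde\decoder$ are continuous, and for any $v \in \mathcal{M}$ with $u := \eta^{-1}(v)\in\solmanifold$,
\[
\|v - \tilde\decoder(\tilde\encoder(v))\|_W \;=\; \|E(u) - E(\decoder(\encoder(u)))\|_W.
\]

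The remaining task is to bound the right-hand side uniformly over $\mathcal{M}$. Here I would exploit compactness of $\solmanifold$ together with continuity of $E$ on $V$ to obtain a uniform continuity statement of the following form: for every $\delta>0$ there exists $\rho>0$ such that for every $u\in\solmanifold$ and every $x\in V$ with $\|x-u\|_V<\rho$ one has $\|E(x)-E(u)\|_W<\delta$. This is a standard finite-subcover argument: cover $\solmanifold$ by open balls on which $E$ oscillates by less than $\delta/2$ and extract a Lebesgue-type number. Given such $\rho$, one selects $(\encoder,\decoder)$ with $\sup_{u\in\solmanifold}\|u-\decoder(\encoder(u))\|_V<\rho$, available since $\delta_{\rdim}(\solmanifold)=0$, and concludes $\sup_{v\in\mathcal{M}}\|v-\tilde\decoder(\tilde\encoder(v))\|_W<\delta$. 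Letting $\delta\to 0$ gives $\delta_{\rdim}(\mathcal{M})=0$.

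This shows $\{\rdim:\delta_{\rdim}(\solmanifold)=0\}\subset\{\rdim:\delta_{\rdim}(\mathcal{M})=0\}$. Swapping the roles of $\solmanifold$ and $\mathcal{M}$ using $\eta^{-1}$ in place of $\eta$ (and extending $\eta^{-1}$ via Dugundji's theorem from $\mathcal{M}\subset W$ to all of $W$) yields the opposite inclusion. The two sets therefore coincide, and taking minima gives $\mindim(\solmanifold)=\mindim(\mathcal{M})$. The only substantive analytic input is Dugundji's extension theorem; everything else is a compactness-plus-uniform-continuity routine.
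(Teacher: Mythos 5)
Your proposal is correct and follows essentially the same strategy as the paper: transport the encoder/decoder pair through the homeomorphism after extending it continuously to the ambient Hilbert space, and use compactness of $\solmanifold$ to control the resulting error. The only technical difference is the extension device: the paper invokes Benyamini--Lindenstrauss (Theorem 1.12) to obtain a \emph{uniformly} continuous extension $\tilde\phi:V\to W$ and concludes directly via a modulus of continuity, whereas you use Dugundji's theorem (which the paper itself mentions as an alternative in the proof of Theorem 2) to get a merely continuous extension $E$, and then compensate with a finite-subcover argument to recover uniform continuity of $E$ in a neighborhood of the compact set $\solmanifold$ --- both devices close the same gap.
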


\proof{Since the situation is symmetric in $\mathcal{S}$ and $\mathcal{M}$, it is sufficient to prove that $\mindim(\mathcal{S})\ge\mindim(\mathcal{M})$. If $\mindim(\mathcal{S})=+\infty$, the inequality is obvious. Hence, we assume there exists some $\rdim\in\mathbb{N}$ for which $\delta_{\rdim}(\solmanifold)=0$. By definition of infimum, there exists a sequence of encoding-decoding pairs $\{(\encoder_{j}, \decoder_{j})\}_{j\ge0}$ in $\mathcal{C}(\solmanifold, \mathbb{R}^{\rdim})\times\mathcal{C}(\mathbb{R}^{\rdim}, V)$ such that $\sup_{u\in\solmanifold}\;||u-\decoder_{j}(\encoder_{j}(u))||_{V}\to0$ as $j\to+\infty$. Let now $\phi: \solmanifold\to\mathcal{M}$ be bicontinuous (recall that the sets are homeomorphic).
Since $\solmanifold$ is compact, $\phi$ admits a uniformly continuous extension $\tilde{\phi}:V\to W$ (cf. Theorem 1.12 in \cite{benyamini}).
We are then allowed to consider the continuous maps $\tildedecoder_j:=\tilde{\phi}\circ\decoder_{j}: \mathbb{R}^{\rdim}\to W$ and $\tildeencoder_j:=\encoder_{j}\circ\phi^{-1}: \mathcal{M}\to\mathbb{R}^{\rdim}$.
For $\omega$ a monotone modulus of continuity of $\tilde{\phi}$, we have

\begin{multline*}
    \delta_{\rdim}(\mathcal{M})\le\lim_{j\to+\infty}\;\sup_{m\in\mathcal{M}}||m-\tildedecoder_j(\tildeencoder_j(m))||_{W}=
    \\
    = \lim_{j\to+\infty}\;\sup_{m\in\mathcal{M}}||m-\tilde{\phi}(\decoder_{j}(\decoder_{j}'(\phi^{-1}(m))))||_{W}=
    \\
    = \lim_{j\to+\infty}\;\sup_{s\in\solmanifold}||\tilde{\phi}(s)-\tilde{\phi}(\decoder_{j}(\decoder_{j}'(s)))||_{W}\le
    \\
    \le \lim_{j\to+\infty}\;\sup_{s\in\solmanifold}\;\omega\left(||s-\decoder_{j}(\decoder_{j}'(s))||_{V}\right)\le
    \\
    \le \lim_{j\to+\infty}\;\omega\left(\sup_{s\in\solmanifold}||s-\decoder_{j}(\decoder_{j}'(s))||_{V}\right) = 0,
\end{multline*}
\\
as $\omega(h)\downarrow0$ whenever $h\downarrow0$. This proves that $\delta_{\rdim}(\solmanifold)=0\implies\delta_{\rdim}(\mathcal{M})=0$ and hence $\mindim(\mathcal{S})\ge\mindim(\mathcal{M})$. \qed
\newline
\newline}

\noindent The minimal latent dimension is also related to the so-called topological dimension, or Lebesgue covering dimension. For a formal definition of the latter we refer to \cite{drutu,engelking}. In particular, if $\solmanifold$ has an intrinsic $p$-dimensional structure, then we are able to bound $\mindim(\solmanifold)$ explicitly. Indeed, by classical results of Dimension Theory, the following theorem holds true.

\begin{theorem}
\label{theorem:mindimbounds}
Let $V$ be a Hilbert space and $\solmanifold\subset V$ a compact subset. If $\solmanifold$ has topological dimension $p$, then $\mindim(\solmanifold)\le 2p + 1$, and the infimum appearing in \eqref{eq:nonlinear-kolmogorov} is attained at all reduced dimensions $n\ge2p+1$.
Additionally, if $\solmanifold$ is a topological $p$-manifold, then the lower bound $\mindim(\solmanifold) \ge p$ also holds.
\end{theorem}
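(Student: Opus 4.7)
The result is a translation of two classical facts from dimension theory into the language of the nonlinear Kolmogorov width \eqref{eq:nonlinear-kolmogorov}. My plan is to treat the upper and lower bounds separately, in both cases passing between statements about $\delta_{\rdim}$ and statements about continuous maps from $\solmanifold$ to Euclidean spaces.

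For the upper bound, I would invoke the Menger--N\"obeling embedding theorem: every compact metrizable space of topological dimension $p$ admits a topological embedding into $\mathbb{R}^{2p+1}$. Applying it to $\solmanifold$ (which is compact and metric as a subset of $V$) yields a continuous injection $\encoder\colon\solmanifold\to\mathbb{R}^{2p+1}$, which by compactness is a homeomorphism onto its closed image $\encoder(\solmanifold)$. The inverse $\encoder^{-1}\colon\encoder(\solmanifold)\to V$ is then continuous and may be extended to a continuous map $\decoder\colon\mathbb{R}^{2p+1}\to V$ by Dugundji's extension theorem, which applies since the codomain $V$ is locally convex. The pair $(\encoder,\decoder)$ satisfies $\decoder\circ\encoder=\mathrm{id}_{\solmanifold}$, so $\delta_{2p+1}(\solmanifold)=0$ and the infimum is attained. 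For any $\rdim>2p+1$, composing $\encoder$ with the inclusion $\mathbb{R}^{2p+1}\hookrightarrow\mathbb{R}^{\rdim}$ and $\decoder$ with a coordinate projection gives an equally valid exact pair, so attainment extends to every $\rdim\ge 2p+1$.

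For the lower bound I would argue by contradiction. Assume $\solmanifold$ is a topological $p$-manifold and $\delta_{\rdim}(\solmanifold)=0$ for some $\rdim<p$; by definition of infimum, for every $\varepsilon>0$ there exist continuous $\encoder_{\varepsilon}$ and $\decoder_{\varepsilon}$ whose reconstruction error is below $\varepsilon/2$. The decisive observation is that $\encoder_{\varepsilon}$ is then an \emph{$\varepsilon$-map} in the sense of classical dimension theory: if $\encoder_{\varepsilon}(u_{1})=\encoder_{\varepsilon}(u_{2})$, then $u_{1}$ and $u_{2}$ both lie within $\varepsilon/2$ of the common point $\decoder_{\varepsilon}(\encoder_{\varepsilon}(u_{1}))$, whence $\|u_{1}-u_{2}\|<\varepsilon$, so every fiber of $\encoder_{\varepsilon}$ has diameter below $\varepsilon$. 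The Hurewicz--Wallman characterization of covering dimension via $\varepsilon$-maps into compact subsets of $\mathbb{R}^{\rdim}$ then yields $\dim\solmanifold\le\rdim$. Since a topological $p$-manifold has covering dimension exactly $p$, this forces $\rdim\ge p$, contradicting the assumption.

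The main subtlety is precisely this last step: the hypothesis $\delta_{\rdim}=0$ is only an infimum statement, so one cannot extract an exact topological embedding into $\mathbb{R}^{\rdim}$ and apply invariance of domain directly, and the $\varepsilon$-map reformulation is what bridges the approximate and the exact regimes. A lesser but non-trivial point on the upper-bound side is that the extension of $\encoder^{-1}$ must land in the Hilbert space $V$, so the scalar-valued Tietze theorem does not suffice and Dugundji's extension theorem is needed instead.
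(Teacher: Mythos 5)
Your upper-bound argument matches the paper's in essence: Menger--N\"obeling supplies a topological embedding of $\solmanifold$ into $\mathbb{R}^{2p+1}$, and a continuous extension of its inverse (the paper mentions both the uniformly continuous extension used in Theorem~\ref{theorem:homeom} and Dugundji's theorem) provides an exact decoder. Your remark that attainment propagates to all $\rdim>2p+1$ via coordinate inclusion and projection is a correct, useful explicitation of the ``at all reduced dimensions $n\ge 2p+1$'' clause.

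For the lower bound your route is correct but genuinely different. The paper embeds a closed ball $B\subset\mathbb{R}^{p}$ into $\solmanifold$ using the manifold hypothesis, applies the Borsuk--Ulam theorem to $\encoder\circ\phi$ restricted to $\partial B$, and reads off the \emph{quantitative} estimate $\delta_{\rdim}(\solmanifold)\ge m/2$ with $m:=\min_{|x|=1}\|\phi(x)-\phi(-x)\|>0$, valid for every continuous encoder-decoder pair. You instead argue by contradiction: if $\delta_{\rdim}(\solmanifold)=0$ with $\rdim<p$, the near-optimal encoders are $\varepsilon$-maps into $\mathbb{R}^{\rdim}$ for all $\varepsilon>0$, and the Alexandroff/Kuratowski characterization of covering dimension via $\varepsilon$-maps into compact spaces of dimension $\le\rdim$ (found in Hurewicz--Wallman and in Engelking, which the paper already cites) forces $\dim\solmanifold\le\rdim<p$, contradicting the fact that a topological $p$-manifold has covering dimension $p$. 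Your version trades a deeper piece of dimension theory for a more elementary topological input; it is actually more general (it gives $\mindim(\solmanifold)\ge p$ for any compact set with $\dim\solmanifold=p$, not only manifolds) but it yields no explicit positive lower bound on $\delta_{\rdim}$, whereas the paper's Borsuk--Ulam argument is reused verbatim in Example~\ref{example:circle} to compute $\delta_1=1$ for the circle. Your parenthetical observation -- that one cannot pass to an exact embedding and invoke invariance of domain because the infimum is not known to be attained for $\rdim<p$ -- correctly isolates the technical pivot; the two proofs simply cross that gap by different bridges, yours via $\varepsilon$-maps and the paper's by working directly with arbitrary inexact pairs.
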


\proof{We shall prove that $\delta_{2p+1}(\solmanifold)=0$, and that the infimum is attained. To this end, we notice that $\solmanifold$ is compact and thus separable. Therefore, by the Menger–Nöbeling embedding theorem (see Theorem 1.11.4 in \cite{engelking}), there exists a subset $A\subset\mathbb{R}^{2p+1}$ and a bicontinuous map $\phi:\solmanifold\to A$. By continuity, the set $A$ is compact. In particular, $\phi^{-1}$ admits a continuous extension $\Psi: A\to V$. The existence of such an extension can be argued as in the proof Theorem \ref{theorem:homeom}, or using other results such as Dugundji extension theorem \cite{dugundji}, with the advantage of generalizing Theorem \ref{theorem:mindimbounds} to the case of normed spaces. Next, we define $\encoder:=\phi$. Then, the pair $(\encoder, \decoder)$ agrees with the definition of Nonlinear Kolmogorov $\rdim$-width and it also yields a perfect reconstruction of $\solmanifold$. The first claim in the theorem follows.

Assume now that $\solmanifold$ is a $p$-manifold and let $\rdim<p$. By definition, there exists a bicontinuous map $\phi$ from the closed unit ball $B:=\{x\in\mathbb{R}^{p},\;|x|\le1\}$ to a certain compact subset $U\subseteq{\solmanifold}$. Let    $$m:=\min_{|x|=1}\;||\phi(x)-\phi(-x)||.$$ Due to compactness, the minimum is attained and thus $m>0$ (recall that $\phi$ is bijective). We now prove that $\delta_{\rdim}(\solmanifold)\ge m/2$, and therefore $\mindim(\solmanifold)\ge p$.
Let $\encoder: \solmanifold\to\mathbb{R}^{\rdim}$ and $\decoder: \mathbb{R}^{\rdim}\to V$ be continuous. We note that the composition $\encoder\circ\phi$ is continuous from $B\subset\mathbb{R}^{p}\to\mathbb{R}^{\rdim}$. Therefore, as $\rdim<p$, by the Borsuk-Ulam theorem \cite{borsuk} we are granted the existence of a point $x^{*}\in B$, $|x^{*}|=1$, for which $\encoder(\phi(x^{*})) = \encoder(\phi(-x^{*}))=:z$. It follows that,
    \begin{multline*}
        \sup_{v\in\solmanifold}\;||v-\decoder(\encoder(v))||\ge
        \sup_{v\in U}\;||v-\decoder(\encoder(v))||
        =\sup_{x\in B}\;||\phi(x)-\decoder(\encoder(\phi(x)))||\ge
        \\
        \\
        \ge \max\;\{||\phi(x^{*})-\decoder(\encoder(\phi(x^{*})))||, 
        \;||\phi(-x^{*})-\decoder(\encoder(\phi(-x^{*})))||\}\ge
        \\
        \\
        \ge\frac{1}{2}||\phi(x^{*})-\decoder(z)||+\frac{1}{2}||\phi(-x^{*})-\decoder(z)||\ge\frac{1}{2}||\phi(x^{*})-\phi(-x^{*})||\ge\frac{m}{2},
    \end{multline*}
\newline
and the proof is complete.\qed
\newline
\newline}

We mention that, in the particular case of $p$-manifolds and under suitable smoothness assumptions, the bounds in Theorem \ref{theorem:mindimbounds} can be sharpened to $\mindim(\solmanifold)\le 2p$ or even $\mindim(\solmanifold)\le 2p-1$ in case $p$ is not a power of 2. These are all consequences of the so-called Whitney embedding theorem and a few of its variants. We do not dive deeper into the matter but leave \cite{skopenkov} as a reference for the interested reader. We also note that the intrinsic dimension of $\solmanifold$ does not uniquely determine the value of $\mindim(\solmanifold)$. In particular, $\solmanifold$ may have topological dimension $p$ but $\mindim(\solmanifold) > p$, coherently with Theorem \ref{theorem:mindimbounds}. In this respect, we report below two simple examples.

\begin{example}
\label{example:circle}
Let $V=\mathbb{R}^{2}$ and $\solmanifold=\{\x\in V : |\x|=1\}$ be the unit circle. Then, $\solmanifold$ is a one-dimensional manifold but $\delta_{1}(\solmanifold) = 1$ and $\mindim(\solmanifold)=2$. To see this, consider any pair of continuous maps $\encoder:\solmanifold\to\mathbb{R}$ and  $\decoder:\mathbb{R}\to\mathbb{R}^{2}$. By the Borsuk-Ulam theorem, there exists a point $\x\in\solmanifold$ such that $\encoder(\x)=\encoder(-\x)$. Therefore, being $||\cdot||=|\cdot|$ the Euclidean norm, 
\begin{multline*}
\sup_{\vb\in\solmanifold}\;||\vb-\decoder(\encoder(\vb))|| 
\ge
\max\{|\x-\decoder(\encoder(\x))|,\;|-\x-\decoder(\encoder(-\x))|\}
\ge
\\
\ge
\frac{1}{2}\left(|x-\decoder(\encoder(\x))|+|-\x-\decoder(\encoder(-\x))|\right)\ge\frac{1}{2}|\x-(-\x)|=1.
\end{multline*}
As $\encoder$ and $\decoder$ are arbitrary, we conclude that $\delta_{1}(\solmanifold)\ge1$. The equality is then obtained by considering the case in which both $\encoder$ and $\decoder$ are identically zero.
\end{example}

\begin{example}
\label{example:curve}
On the spatial domain $\Omega = (0,\pi)$, consider the boundary value problem
\[
\begin{cases}
u'' = -u & x\in\Omega
\\
u(0) = 10(2\mu^{3}-3\mu^{2}+\mu)&
\\
u'(\pi) = 1-2|1-2\mu|,&
\end{cases}
\]
where $\mu\in[0,1]$ is a parameter. Let us then consider the solution manifold
$\solmanifold=\{u_{\mu}\}_{\mu\in[0,1]}$ as a subset of $V=L^{2}(\Omega)$. Then, $\solmanifold$ is a 1-dimensional manifold but its minimal latent dimension equals $\mindim(\solmanifold)=2$. 
Indeed, explicitly expanding the solutions reads $$u_{\mu}(x) = 10(2\mu^{3}-3\mu^{2}+\mu)\cos x + (2|1-2\mu|-1)\sin x.$$ It is then clear that, up to scaling of the $L^{2}$-norm, $\solmanifold$ can be isometrically identified with the curve $\mu\to(20\mu^{3}-30\mu^{2}+10\mu,\;2|1-2\mu|-1)$ in $\mathbb{R}^{2}$. But the latter curve is a compact manifold with positive nonlinear Kolmogorov 1-width, as it is homeomorphic to the unit circle (see Figure \ref{fig:curve} and Theorem \ref{theorem:homeom}).
\end{example}

\noindent\textit{Remark.\;} Here we only considered the case of Hilbert spaces, which is the typical framework used for elliptic PDEs. However, as mentioned in the proof of Theorem \ref{theorem:mindimbounds}, many of the above ideas and results can be adapted to the more general context of normed and Banach spaces.

\begin{figure}
  \begin{minipage}[t]{0.38\textwidth}
    \includegraphics[width=\textwidth]{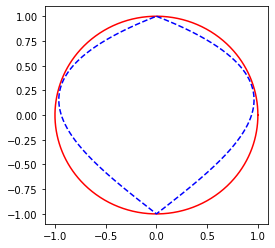}
  \end{minipage}\hfill
  \begin{minipage}[b]{0.62\textwidth}
    \caption{
    Reference picture for Example \ref{example:curve}, Section \ref{sec:nl-kw}. In blue (dashed line), the curve $\mu\to(20\mu^{3}-30\mu^{2}+10\mu,\;2|1-2\mu|-1)$, for $0\le\mu\le1$; in red (straight line), the unit circle. The two curves are clearly homeomorphic.
    \vspace{7em}
    } \label{fig:curve}
  \end{minipage}
\end{figure}

\subsection{Application to Parametrized PDEs}
\label{sec:nonlinear-param-pdes}
Let us now consider the case of a PDE that depends on a vector of $p$ parameters. We fix a parameter space $\Theta\subset\mathbb{R}^{p}$ and a Hilbert state space $V$. As before, for each $\mup\in\Theta$ we denote the corresponding PDE solution with $\up$. Similarly, we define $\solmanifold=\{\up\}_{\mup\in\Theta}$. Notice that we refer to $\solmanifold$ as the solution manifold even though, in fact, it is not granted that $\solmanifold$ is a manifold in the topological sense. This latter property can be recovered under additional hypotheses on the parameter space and the parametric map.

We consider the problem of finding a low-dimensional representation of $\solmanifold$ by means of nonlinear reduction. In particular, we wish to compress $\solmanifold$ as much as possible without paying in terms of accuracy, which corresponds to working with the minimal dimension $\mindim(\solmanifold)$. To this end, we must take into account the fact that the dimension of the parameter space $\Theta$ influences the low-dimensional structure of $\solmanifold$, in fact, the latter is ultimately defined in terms of $p$ scalar parameters. 

Parallel to this, one may also exploit the parameters as additional tools during the dimensionality reduction process. This corresponds to replacing the solution manifold with the augmented set $\solmanifold_{\Theta}:=\{(\mup, \up)\}_{\mup\in\Theta}\subset\mathbb{R}^{p}\times V$, where $\mup$ appears explicitly. The following Theorem provides some insights about both alternatives.

\begin{theorem}
\label{theorem:parametric-reduction}
Let $\mup\to\up$ be a map from a compact set $\Theta\subset\mathbb{R}^{p}$ to some Hilbert space $V$. Define the sets $\solmanifold:=\{\up\}_{\mup\in\Theta}$ and $\solmanifold_{\Theta}:=\{(\mup, \up)\}_{\mup\in\Theta}$. We have the following:
\begin{itemize}
        \item [a1)] if the map $\mup\to\up$ is Lipschitz continuous, then $\mindim(\solmanifold)\le 2p+1$.
        \item [a2)] if there exists at least an internal point $\mup_{0}\in\Theta$ where the correspondence $\mup\to\up$ is locally injective, then $\mindim(\solmanifold) \ge p$.
        \item [a3)] if the map $\mup\to\up$ is continuous and injective, then $\mindim(\solmanifold) = \mindim(\Theta)$. In particular, $\mindim(\solmanifold)=p$ whenever $\Theta$ has nonempty interior.
        \item [b1)] if the map $\mup\to\up$ is continuous, then $\mindim(\solmanifold_{\Theta})=\mindim(\Theta)$. In particular, $\mindim(\solmanifold_{\Theta})=p$ whenever $\Theta$ has nonempty interior.
\end{itemize}
\end{theorem}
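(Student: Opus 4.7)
The plan is to derive all four claims from Theorems 1 and 2 together with standard topological arguments on the parametric map; compactness of $\Theta$, preserved under any continuous map, will be crucial throughout.

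For part (a1), I would start by noting that $\solmanifold$ is the image of the compact set $\Theta\subset\mathbb{R}^{\pdim}$ under a Lipschitz map, hence compact and of Hausdorff dimension at most $\pdim$. Since the topological (Lebesgue covering) dimension is bounded above by the Hausdorff dimension on separable metric spaces (a classical Szpilrajn-type inequality), I obtain $\dim_{\mathrm{top}}(\solmanifold)\le\pdim$, and Theorem 2 applied with this bound yields $\mindim(\solmanifold)\le 2\pdim+1$. For (a2), the hypothesis provides an open Euclidean ball $B\subset\Theta$ containing $\mup_{0}$ on which the parametric map is injective. Picking a smaller closed ball $\overline{B}'\subset B$, the restriction of $\mup\to\up$ becomes a continuous injection from a compact set into the Hausdorff space $V$, hence a homeomorphism onto its image; here continuity of $\mup\to\up$ is inherited from the PDE setup (and is essentially required to make ``locally injective'' meaningful in this context). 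Thus $\solmanifold$ contains a topologically embedded closed $\pdim$-ball, and the Borsuk--Ulam argument used in the second half of the proof of Theorem 2 applies verbatim on this subset, giving $\mindim(\solmanifold)\ge\pdim$.

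For (a3), the parametric map is by assumption a continuous bijection from the compact space $\Theta$ onto $\solmanifold\subset V$, and since $V$ is Hausdorff this is automatically a homeomorphism. Theorem 1 then immediately yields $\mindim(\solmanifold)=\mindim(\Theta)$. For the ``in particular'' clause, the identity inclusion $\Theta\hookrightarrow\mathbb{R}^{\pdim}$ paired with the identity decoder realises $\delta_{\pdim}(\Theta)=0$, so $\mindim(\Theta)\le\pdim$; conversely, if $\Theta$ has nonempty interior it contains a topologically embedded closed $\pdim$-ball, and reusing the argument of (a2) on $\Theta$ itself gives $\mindim(\Theta)\ge\pdim$. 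For (b1), the projection $(\mup,\up)\to\mup$ is a continuous bijection from $\solmanifold_{\Theta}$ onto $\Theta$, and its inverse $\mup\to(\mup,\up)$ is continuous precisely because the parametric map is. Hence $\solmanifold_{\Theta}$ is compact and homeomorphic to $\Theta$, so Theorem 1 gives $\mindim(\solmanifold_{\Theta})=\mindim(\Theta)$, and the nonempty-interior addendum follows as in (a3).

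The main delicate point I anticipate is (a1): the Lipschitz hypothesis is indispensable, since generic continuous surjections can dramatically increase topological dimension (Peano/space-filling curves are the classical obstruction), so the proof must route through the metric-geometric inequality $\dim_{\mathrm{top}}\le\dim_{H}$ rather than any purely topological factorisation. All remaining parts are straightforward consequences of Theorems 1 and 2 once the right compactness/continuity bookkeeping is in place.
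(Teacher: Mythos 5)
Your proof is correct and follows essentially the same route as the paper: (a1) via the Szpilrajn inequality $\dim_{\mathrm{top}}\le\dim_{H}$ combined with Lipschitz subinvariance of Hausdorff dimension and Theorem~2; (a2) and the ``in particular'' clauses via an embedded closed $p$-ball and the Borsuk--Ulam lower bound; (a3) and (b1) via the compact-injective-continuous $\Rightarrow$ homeomorphism argument and Theorem~1. The only cosmetic differences are that the paper phrases (a2) through Theorem~1 applied to $B$ and $u(B)$ rather than rerunning Borsuk--Ulam directly, and derives (b1) as a corollary of (a3) applied to the auxiliary map $U(\boldsymbol{\mu})=(\boldsymbol{\mu},u_{\boldsymbol{\mu}})$; both are equivalent in substance to what you wrote.
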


\proof{For the sake of brevity, let us define the map $u:\Theta\to V$ as $u(\mup):=\up$.

\begin{itemize}
    \item [a1)] Let $\dim(\solmanifold)$ and $\dim_{H}(\solmanifold)$ be respectively the topological dimension (covering dimension) and the Hausdorff dimension of $\solmanifold$. A result due to Sznirelman, see Theorem 2.43 in \cite{drutu}, ensures that $\dim(\solmanifold)\le\dim_{H}(\solmanifold)$. Since $u$ is Lipschitz, we also have $\dim_{H}(\solmanifold) = \dim_{H}(u(\Theta))\le\dim_{H}(\Theta)\le p$, as $\Theta\subset\mathbb{R}^{p}$ and $\dim_{H}$ is known to be Lipschitz subinvariant. Thus $\dim(\solmanifold) \le p$ and the conclusion follows by Theorem \ref{theorem:mindimbounds} (notice that, since $\Theta$ is compact and $u$ is continuous, $\solmanifold$ is also compact).
    \\
    \item [a2)] Let $B\subseteq\Theta$ be a closed ball centered at $\mup_{0}$ such that $u_{|B}$, the restriction of $u$ to $B$, is injective. Then, $u_{|B}:B\to u(B)$ is a continuous bijection between compact metric spaces, which is enough to grant the existence and the continuity of the inverse map $u_{|B}^{-1}$. In particular, the sets $B$ and $u(B)$ are homeomorphic and, by Theorem \ref{theorem:homeom}, $p = \mindim(B) = \mindim(u (B))$. Since $\mindim(u(B))\le \mindim(\solmanifold)$, this proves (a2).
    \\
    \item [a3)] As in the proof of a2), we notice that $u$ admits a continuous inverse and is thus an homeomorphism between $\Theta$ and $\solmanifold$ (which are both compact). Then, $\mindim(\Theta)=\mindim(\solmanifold)$ by Theorem \ref{theorem:homeom}. Finally, if $\Theta$ has nonempty interior, we may select a closed ball $B\subseteq\Theta$ and notice that $p = \mindim(B) \le \mindim(\Theta)\le p\implies\mindim(\Theta)=p$.
    \\
    \item [b1)] Define the Hilbert space $\tilde{V}:=\mathbb{R}^{p}\times V$ and the map $U:\Theta\to\tilde{V}$ as $U(\mup):=(\mup, \up)$. Then $U$ is both continuous and injective. Thus, by a3) we have $\mindim(\Theta)=\mindim(U(\Theta))=\mindim(\solmanifold_{\Theta})$.
\end{itemize}\qed\newline}

\noindent
\noindent\textit{Remark.\;} Theorem \ref{theorem:parametric-reduction} holds for a generic Hilbert-valued map, meaning that the correspondence $\mup\to \up$ needs not to involve the solution of a PDE. Because of this generality, some hypotheses cannot be weakened. For instance, one cannot replace the requirement of Lipschitz continuity in statement (a1) with continuity, mainly because of space-filling curves. As a counterexample, consider the Hilbert space of real square summable sequences, $V=\ell^{2}$. Then, by a straightforward application of the Hahn–Mazurkiewicz theorem (see Theorem 3-30 in \cite{hocking}), there exists a continuous map from the unit interval to $\ell^{2}$ whose image $\solmanifold$ is the so-called Hilbert cube, informally $\solmanifold = \prod_{n=1}^{+\infty} [0, 1/n]$. Therefore, being $\Theta:=[0,1]$ the parameter space, we have a case in which $p=1$ but $\mindim(\solmanifold)=+\infty$. In fact, for each $\rdim\in\mathbb{N}$, the Hilbert cube contains an homeomorphic copy of the $\rdim$-dimensional unit cube $I^{\rdim}$. Thus, $\rdim = \mindim(I^{\rdim}) \le \mindim(\solmanifold)$ for all $n\ge0$ and so $\mindim(\solmanifold)=+\infty$.
\newline
\newline
Before moving to the actual description of our Deep Learning approach, we conclude this Section with a practical application of the results we have presented so far. In particular, we focus on the case of second order elliptic PDEs.

\subsubsection{Second Order Elliptic PDEs}

In order to state the main result, we first provide some notation. We denote by $\Omega$ a bounded domain in $\mathbb{R}^{d}$ and by $\cdot$ the scalar product in $\mathbb{R}^{d}$.
For $1\le q<+\infty$, we denote by $L^{q}(\Omega)$ the Lebesgue space of $q$-integrable real-valued maps; when $q=+\infty$, $L^{\infty}(\Omega)$ is defined as the Banach space of essentially-bounded maps. Similarly, we define the spaces $L^{q}(\Omega, \mathbb{R}^{d})$ and $L^{q}(\Omega,\mathbb{R}^{d\times d})$ in the Bochner sense, where $\mathbb{R}^{d}$ is considered with the Euclidean norm $|\cdot|$ and $\mathbb{R}^{d\times d}$ with the operator norm, $|\boldsymbol{A}|_{\mathbb{R}^{d\times d}}:=\sup_{|\xib|=1}|\boldsymbol{A}\xib|$. Given $k\in\mathbb{N}$, $1\le q<+\infty$, we write $W^{k,q}(\Omega)$ for the Sobolev space of all $w\in L^{q}(\Omega)$ that are $k$-times weakly differentiable with derivatives in $L^{q}(\Omega)$. We use $W_{0}^{k,q}(\Omega)$ to denote the subspace of all $w\in W^{k,q}(\Omega)$ that vanish on $\partial\Omega$, and we write $W^{-k,q}(\Omega)$ for the dual space of $W^{k,q}_{0}(\Omega)$ with respect to the duality product $\langle f,g\rangle\to\int_{\Omega}fg$. In order to prescribe Dirichlet boundary data, we also make use of the Sobolev-Slobodeckij spaces $W^{s,q}(\partial\Omega)$, where $s>0$ is typically not an integer. All the aforementioned spaces are considered with their usual norms, see e.g. \cite{evans}.
\newline
\newline
We define the sets of all admissible conductivity tensor-fields and transport fields, respectively $\Sigma(\Omega)\subset L^{\infty}(\Omega, \mathbb{R}^{d\times d})$ and $B(\Omega)\subset L^{\infty}(\Omega, \mathbb{R}^{d\times d})$, as follows. We let $\cond\in\Sigma(\Omega)$ if and only if it is uniformly elliptic, that is, there exists $\varepsilon>0$ such that for almost all $x\in\Omega$ one has $\cond(x)\xib\cdot \xib\ge\varepsilon|\xib|^{2}$ for all $\xib\in\mathbb{R}^{d}$. We let $\adve\in B(\Omega)$ if and only if it is differentiable and divergence free, that is, $\adve\in\mathcal{C}^{1}(\Omega, \mathbb{R}^{d})$ and div$(\adve)=0$ in $\Omega$. We endow both $\Sigma(\Omega)$ and $B(\Omega)$ with the infinity norm $||\cdot||_{\infty}$. We are now able to state the following.

\begin{theorem}
\label{theorem:advediff-reduction}
Let $\Omega\subset\mathbb{R}^{d}$ be a bounded domain with Lipschitz boundary, and let $\Theta\subset\mathbb{R}^{p}$ be a compact subset with nonempty interior. Let $q\ge 2d/(d+2)$ be finite, and define the conjugate exponent $q':=q/(q-1)$. Moreover, let $\mup\to\condp\in\Sigma(\Omega),$ $\mup\to\advep\in B(\Omega),$ $\mup\to\forcep\in W^{-1,q'}(\Omega)$ be parameter dependent coefficients and $\mup\to \gp\in W^{1/q',q}(\partial\Omega)$ boundary data. For each $\mup\in\Theta$, we define $\up\in W^{1,q}(\Omega)$ as the unique solution to the following second order elliptic PDE

\[
\begin{multlined}
u\in W^{1,q}(\Omega):\\ u_{|\partial\Omega}=\gp\text{ and }\int_{\Omega}\condp\nabla u\cdot\nabla w + \int_{\Omega}\left(\advep\cdot\nabla u\right) w = \int_{\Omega}\forcep w\quad\forall w\in W^{1,q'}_{0}(\Omega),
\end{multlined}\]
\newline
Consider the solution manifold $\solmanifold:=\{\up\}_{\mup\in\Theta}$ as a subset of $V:=L^{2}(\Omega)$. The following hold true:
\begin{itemize}
    \item [i)] if the dependence of $\condp$, $\advep$, $\forcep$, $\gp$ on $\mup$ is Lipschitz continuous, then $\mindim(\solmanifold)\le 2p+1$.
    \item [ii)] if $\condp$, $\advep$, $\forcep$, $\gp$ depend continuously on $\mup$ and the solution map $\mup\to \up$ is one-to-one, then $\mindim(\solmanifold)=p$.
\end{itemize}
Additionally, let $\solmanifold_{\Theta}:=\{(\mup, \up)\}_{\mup\in\Theta}\subset\mathbb{R}^{d}\times V$ be the augmented manifold. Then:
\begin{itemize}
    \item [iii)] if $\condp$, $\advep$, $\forcep$, $\gp$ depend continuously on $\mup$, then $\mindim(\solmanifold_{\Theta})=p$.
\end{itemize}
\end{theorem}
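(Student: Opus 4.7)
Each of the three statements will be derived from the corresponding case of Theorem \ref{theorem:parametric-reduction}: (i) from (a1), (ii) from (a3), (iii) from (b1). In the last two cases the conclusion $\mindim(\Theta)=\pdim$ is automatic from compactness and the nonempty-interior hypothesis on $\Theta$. The entire task therefore reduces to transferring the regularity of the data $\mup\mapsto(\condp,\advep,\forcep,\gp)$ to the solution map $\mup\mapsto\up$, viewed with values in $V=L^{2}(\Omega)$. The strategy is to first prove a stability estimate in the natural $W^{1,q}(\Omega)$-norm and then conclude via the continuous embedding $W^{1,q}(\Omega)\hookrightarrow L^{2}(\Omega)$, which holds precisely because of the standing assumption $q\ge 2\spacedim/(\spacedim+2)$.

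\textbf{Stability argument.} The ellipticity constant $\varepsilon(\mup):=\mathrm{ess\,inf}_{x\in\Omega,\;|\xib|=1}\condp(x)\xib\cdot\xib$ is continuous in $\mup$ by the elementary bound $|\varepsilon(\mup_{1})-\varepsilon(\mup_{2})|\le\|\cond_{\mup_{1}}-\cond_{\mup_{2}}\|_{\infty}$; compactness of $\Theta$ then forces $\inf_{\mup\in\Theta}\varepsilon(\mup)>0$, giving \emph{uniform} ellipticity. Together with the divergence-free character of $\advep$ (which makes the first-order term skew-symmetric on $W^{1,2}_{0}(\Omega)$) and the postulated well-posedness, this yields a uniform a priori bound $M:=\sup_{\mup\in\Theta}\|\up\|_{W^{1,q}}<+\infty$. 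Subtracting the variational identities for $u_{\mup_{1}}$ and $u_{\mup_{2}}$, and absorbing the boundary-data difference via a continuous lift of $g_{\mup_{1}}-g_{\mup_{2}}$, one gets an equation for $u_{\mup_{1}}-u_{\mup_{2}}$ driven by
$$f_{\mup_{1}}-f_{\mup_{2}}+\mathrm{div}\big((\cond_{\mup_{2}}-\cond_{\mup_{1}})\nabla u_{\mup_{2}}\big)-(\adve_{\mup_{1}}-\adve_{\mup_{2}})\cdot\nabla u_{\mup_{2}},$$
so that uniform invertibility of the operator associated with $a_{\mup_{1}}$, combined with the bound $\|u_{\mup_{2}}\|_{W^{1,q}}\le M$, produces
$$\|u_{\mup_{1}}-u_{\mup_{2}}\|_{W^{1,q}}\le C\Big(\|\cond_{\mup_{1}}-\cond_{\mup_{2}}\|_{\infty}+\|\adve_{\mup_{1}}-\adve_{\mup_{2}}\|_{\infty}+\|f_{\mup_{1}}-f_{\mup_{2}}\|_{W^{-1,q'}}+\|g_{\mup_{1}}-g_{\mup_{2}}\|_{W^{1/q',q}(\partial\Omega)}\Big)$$
with $C$ independent of $\mup_{1},\mup_{2}\in\Theta$. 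Composition with the Sobolev embedding gives the same estimate in $L^{2}(\Omega)$, which is exactly what is needed to propagate (Lipschitz) continuity from the data to the solution map.

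\textbf{Main obstacle.} The delicate step is a \emph{uniform} inf-sup / bounded-inverse estimate for $a_{\mup}$ on the non-Hilbert pair $W^{1,q}(\Omega)\times W^{1,q'}_{0}(\Omega)$ when $q\ne 2$: for $q=2$ coercivity is immediate from Lax--Milgram, but for general admissible $q$ one has to lean on Meyers-type higher-integrability results for divergence-form elliptic equations. Since well-posedness is postulated at every single $\mup$ and the coefficients vary continuously on the compact set $\Theta$, a standard perturbation-plus-covering argument upgrades the pointwise estimates to a uniform one. Once the stability bound is secured, parts (i), (ii), (iii) follow at once from (a1), (a3) and (b1) of Theorem \ref{theorem:parametric-reduction}, with the additional observation that in (ii) and (iii) the hypothesis of nonempty interior gives $\mindim(\Theta)=\pdim$.
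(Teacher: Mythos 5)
Your proposal is correct and follows essentially the same route as the paper: prove (Lipschitz) continuity of the solution map $\mup\mapsto\up$ in $W^{1,q}(\Omega)$, compose with the Sobolev embedding $W^{1,q}(\Omega)\hookrightarrow L^{2}(\Omega)$ (which is where $q\ge 2d/(d+2)$ enters), and then invoke parts (a1), (a3), (b1) of Theorem~\ref{theorem:parametric-reduction}. The paper packages the stability step into two appendix lemmas — an abstract Banach-space Lions--Lax--Milgram result (Lemma~\ref{lemma:abstract-continuity}) whose key observation is that the inf-sup constant $\lambda(a)$ is Lipschitz in the bilinear form $a$, and its instantiation for the advection--diffusion form with a trace lift (Lemma~\ref{lemma:pde-continuity}) — whereas you derive the same estimate inline by subtracting the variational identities; the logical content is identical. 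One small simplification you may take: there is no need to appeal to Meyers-type higher-integrability for the uniform inf-sup in the non-Hilbert case; the Lipschitz dependence $|\lambda(a)-\lambda(a')|\le\|a-a'\|$ together with compactness of $\{a_{\mup}\}_{\mup\in\Theta}$ and the postulated pointwise well-posedness already forces $\inf_{\mup}\lambda(a_{\mup})>0$, which is exactly the "perturbation-plus-covering" alternative you mention.
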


\proof{First of all, we notice that if the data $\condp,\advep,f_{\mup},g_{\mup}$ depend continuously on $\mup$, then so does $\up\in W^{1,q}(\Omega)$. This is easily proven by composition (cf. Lemma \ref{lemma:pde-continuity} in the Appendix). Also, the compactness of $\Theta$ implies that of the subsets
        $$\{\condp\}_{\mup\in\Theta}\subset\Sigma(\Omega),\quad\{\advep\}_{\mup\in\Theta}\subset B(\Omega),$$
        $$\quad\{\forcep\}_{\mup\in\Theta}\subset W^{-1,q'}(\Omega),\quad\{g_{\mup}\}_{\mup\in\Theta}\subset W^{1/q',q}(\partial\Omega).$$
Therefore, whenever the coefficients and the boundary data are Lipschitz continuous in $\mup$, so is the solution map $\Theta\to W^{1,q}(\Omega)$ (by composition, cf. Lemma \ref{lemma:pde-continuity}).
Finally, since $q\ge 2d/(d+2)$, we have the embedding $W^{1,q}(\Omega)\hookrightarrow L^{2}(\Omega)$ according to classical Sobolev inequalities (cf. Theorem 5.4 in \cite{adams}). In particular, all the aforementioned properties are preserved if we consider the parametric map as taking values in $V:=L^{2}(\Omega)$.
Statements (i), (ii) and (iii) now directly follow from Theorem \ref{theorem:parametric-reduction}.\qed\newline\newline}

\noindent\textit{Remark.\;} In Theorem \ref{theorem:advediff-reduction}, the PDE is firstly solved in the Banach space $W^{1,q}(\Omega)$ and the solution manifold is then embedded in the Hilbert space $L^{2}(\Omega)$. This construction allows for a large spectrum of PDEs where the solution $\up$ may exhibit singularities.
A remarkable example is found for the dimensions $d=2,3$, where singular forces such as Dirac delta functions produce solutions $\up\notin H^{1}(\Omega):=W^{1,2}(\Omega)$ \cite{casas}. In these cases, the above Theorem still applies, e.g. with $q'=4$ and $q=4/3$ (cf. Morrey embedding). 
Nevertheless, we shall point out that in the Hilbert case, $q'=q=2$, it is possible to restrict the state space to $V=H^{1}(\Omega)\subset L^{2}(\Omega)$. Note also that in this case the condition $q\ge 2d/(d+2)$ is redundant as it is trivially satisfied for any $d\ge1$, coherently with the fact that $H^{1}(\Omega)$ always embeds in $L^{2}(\Omega)$.

\section{Learning the Solution Manifold by means of Neural Networks}
\label{sec:learning}

\review{We now present our Deep-Learning approach to Reduced Order Modelling (DL-ROM). After a brief recap on deep feedforward neural networks, Section \ref{sec:nn}, we move to the actual description of the DL-ROM, where we discuss both its theoretical and numerical properties (Section \ref{sec:dlrom}). Then, respectively in Sections \ref{sec:ae} and \ref{sec:cc}, we dive deeper into the design choices for the nonlinear dimensionality reduction and the approximation of the reduced map.}

\subsection{Neural Networks}
\label{sec:nn}
Neural networks are nonlinear universal approximators that have recently gained a lot of popularity in several fields such as Machine Learning, Statistics and Approximation Theory. The fundamental building block of a neural network is the \textit{layer}. Given two state spaces $V_{1}:=\mathbb{R}^{n_{1}}$, $V_{2}:=\mathbb{R}^{n_{2}}$ and a scalar-valued function $\rho:\mathbb{R}\to\mathbb{R}$, a layer $L$ with activation $\rho$ is a map $L:V_{1}\to V_{2}$ of the form $$L(\vb) = \rho\left(\weight \vb+\bias\right),$$ where $\weight\in\mathbb{R}^{n_{2}\times n_{1}}$ and $\bias\in\mathbb{R}^{n_{2}}$ are respectively the weights and biases of the layer. Notice that, as $\weight \vb+\bias$ is $n_{2}$-dimensional, the operation $\weight \vb+\bias\to\rho\left(\weight \vb+\bias\right)$ is intended componentwise.

Deep neural networks are defined through the composition of multiple layers. More precisely, a DNN with $l\ge0$ \textit{hidden layers} is a map of the form $$\Phi=L_{l+1}\circ L_{l}...\circ L_{1},$$ where each $L_{i}$ is a layer. The layers $L_{i}: \mathbb{R}^{n_{i}}\to\mathbb{R}^{n_{i+1}}$, for $i=1,..,l,$ are called hidden layers, while $L_{l+1}$ is the output layer. The latter is sometimes assumed to have no activation, as in \cite{kutyniok,schwab}. We refer to $l$ as to the depth of the network. When $l=1$, the adjective \textit{deep} is usually dropped and $\Phi$ is said to be \textit{shallow}. Note that we also allow for the degenerate case $l=0$, where the NN actually reduces to the output layer. This is somewhat unusual, but it will help us in making the notation lighter. Finally, we say that $\Phi$ has activation function $\rho$, or equivalently that $\Phi$ is a $\rho$-DNN, if all of its (hidden) layers share that same activation.
\newline
\newline
The practical implementation of a neural networks is usually done as follows. At first, one designs the NN architecture. This corresponds to choosing (i) the depth $l$, (ii) the number of neurons within each layer $L_{i}$, i.e. the output dimension $n_{i+1}$, and (iii) suitable constraints on the weights and biases. Among other things, the latter is what makes the difference between dense layers (no constraints on $\weight$) and convolutional layers ($\weight$ is sparse and multiple values are shared), which heavily impacts on the network \textit{complexity} \cite{gu}. The second step regards the network training. There, the weights and biases are tuned and optimized according to a suitable loss function. The optimization is typically performed using gradient descent algorithms \cite{schmidhuber}. To measure the network complexity, we count its degrees of freedom, that is the number of scalar parameters that are actually optimized during the training. Therefore, the contribution of each layer $L_{i}$ depends on the corresponding type. For dense layers it is $(n_{i}+1)n_{i+1}$, while it can be substantially lower in sparse and convolutional layers.

\subsection{Deep-Learning based Reduced Order Model}
\label{sec:dlrom}
\begin{figure}
\includegraphics[width=270pt]{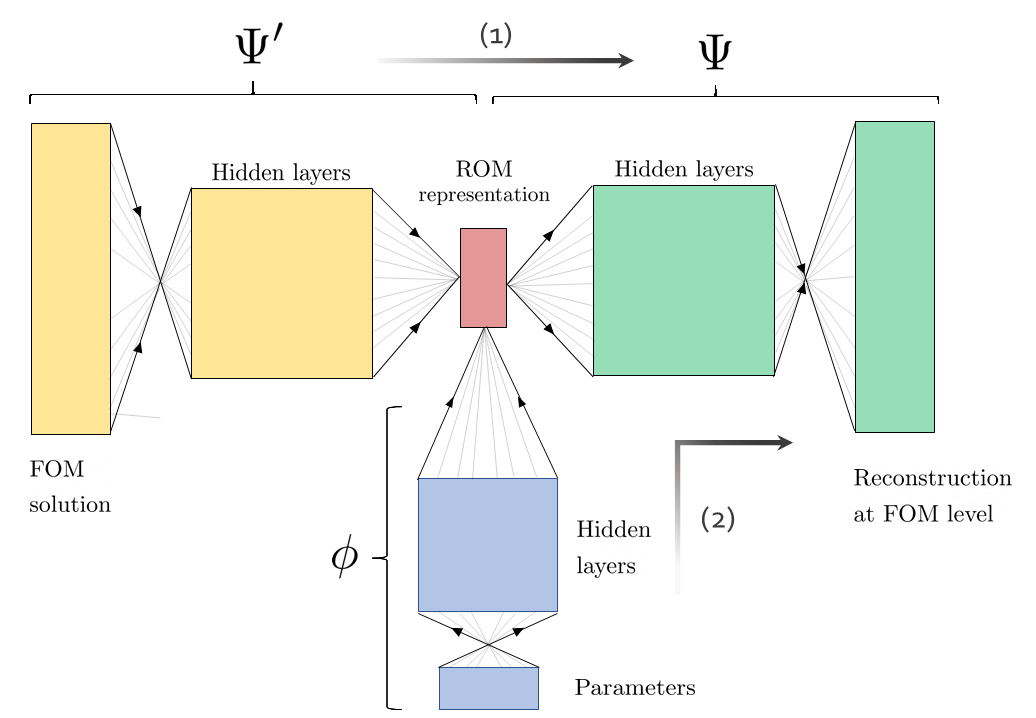}
\caption{
    \footnotesize{Workflow in the DL-ROM approach. The whole process consists of three neural networks, $\encoder$, $\decoder$ and $\phi$. First, the autoencoder $\decoder\circ\encoder$ is trained over several high-fidelity snapshots in order to learn an approximation of the identity operator over the solution manifold (step 1). In this way, the encoder provides a low-dimensional representation of the FOM solutions, from $\ufomp\in\mathbb{R}^{\fomdim}$ to $\uromp:=\encoder(\ufomp)\in\mathbb{R}^{n}$. At this point, the third network, $\phi$, is trained to learn the map $\mup\to\uromp$ (step 2). 
    Finally, the composition $\Phi:=\decoder\circ\phi$ defines the DL-ROM approximation of the parameter-to-state map.}
}
\label{fig:workflow}
\end{figure}
We are given a parameter space $\Theta\subset\mathbb{R}^{p}$, a parameter dependent PDE and a high-fidelity FOM $\mup\to \ufomp\in\mathbb{R}^{N_{h}}$. 
Our purpose is to approximate the solution map by means of a suitable neural network $\Phi:\mathbb{R}^{p}\to\mathbb{R}^{N_{h}}$. 
\review{For the sake of simplicity, through the whole section, we make the following assumption.
\begin{assump}
\label{assumption:dl-rom}
All DNNs use the same activation function $\rho:\mathbb{R}\to\mathbb{R}$ for the hidden layers,  where $\rho$ is Lipschitz continuous and not a polynomial. The parameter space $\Theta$ is compact and the parametric map $\mup\to\textnormal{\textbf{u}}_{\mup}^{h}$ is continuous.
\end{assump}
\noindent A typical activation function satisfying the above requirements is the so-called $\alpha$-leaky ReLU, i.e. $\rho(x) = x\boldsymbol{1}_{[0,+\infty)}(x) + \alpha x\boldsymbol{1}_{(-\infty, 0)}(x)$ where $\alpha>0$ is fixed. In order to build $\Phi$, we mimic the two steps paradigm of the Reduced Basis method, yielding the workflow depicted in Figure \ref{fig:workflow}. This corresponds to introducing the three networks below,
$$\Psi':\mathbb{R}^{\fomdim}\to\mathbb{R}^{n},\quad\quad\Psi:\mathbb{R}^{\rdim}\to\mathbb{R}^{\fomdim}$$
$$\phi:\mathbb{R}^{p}\to\mathbb{R}^{\rdim}.$$
The first two, respectively the encoder $\encoder$ and the decoder $\decoder$, serve for the nonlinear dimensionality reduction of the solution manifold $\solmanifoldh:=\{\ufomp\}_{\mup\in\Theta}\subset\mathbb{R}^{N_{h}}$, which we map onto $\mathbb{R}^{n}$. According to our previous analysis, we set the latent dimension to be $n:=\mindim(\solmanifoldh)$. As discussed in Theorem \ref{theorem:parametric-reduction}, this often translates to $n\le 2p+1$, resulting in a massive reduction whenever $p\ll N_{h}$. The purpose of the third network is to approximate the reduced parametric map $\mathbb{R}^{p}\ni\mup\to\Psi'(\ufomp)\in\mathbb{R}^{n}$, so that the final ROM is obtained by composition of $\phi$ and $\Psi.$ 

At the very end, the role of the encoder $\Psi'$ is only auxiliary as the DL-ROM ultimately results in a single network $\Phi:=\decoder\circ\phi$. However, we believe that our construction significantly facilitates the practical problem of designing the architectures. This is because the three networks have very different purposes. $\Psi'$ and $\Psi$ are required to learn the intrinsic characteristics of the solutions, so their complexity is related to the richness of the solution manifold and the geometrical properties of the solutions. Conversely, $\phi$ needs to understand the interplay between solutions and parameters, which can result in a very complicated relation even if the solution manifold is fairly simple (e.g. linear). As the design of DNN architectures is still far from obvious, we believe that this perspective can be of help in practical implementations. Nonetheless, this splitting of the ROM also allows for a few considerations on the numerical errors, that we discuss below.\\\\\noindent We notice that, due to assumption \eqref{assumption:dl-rom}, all the networks in the DL-ROM pipeline are Lipschitz continuous. Also, without loss of generality, we can assume that $\Psi$ has a Lipschitz constant equal to 1. In fact, for any $C>0$, the maps $\tilde{\Psi}'(\x):=C\Psi'(\x)$ and $\tilde{\Psi}(\x):=\Psi(\x/C)$ define the same autoencoder as $\Psi'$ and $\Psi$. As a consequence, the worst-case approximation error of the DL-ROM
$$\mathcal{E}_{A}:=\sup_{\mu\in\Theta}||\ufomp-\Psi(\phi(\mup))||,$$
can be bounded as $\mathcal{E}_{A}\le\mathcal{E}_{R}+\mathcal{E}_{P}$, the latter being respectively the reconstruction error and the parametric error,
$$\mathcal{E}_{R}:=\sup_{\ub^{h}\in\solmanifoldh}||\ub^{h}-\Psi(\Psi'(\ub^{h}))||,\quad\quad
\mathcal{E}_{P}:=\sup_{\mu\in\Theta}|\Psi'(\ufomp)-\phi(\mup)|.$$
where $|\cdot|$ is the Euclidean norm, while we recall that $||\cdot||$ comes from the metric originally chosen over the state space $V_{h}\subset V$. In fact,
\begin{multline*}
    \sup_{\mu\in\Theta}||\ufomp-\Psi(\phi(\mup))||\le \sup_{\mu\in\Theta}||\ufomp-\Psi(\Psi'(\ufomp))||+\sup_{\mu\in\Theta}||\Psi(\Psi'(\ufomp))-\Psi(\phi(\mup))||
    \le\\
    \le \sup_{\ub^{h}\in\solmanifoldh}||\ub^{h}-\Psi(\Psi'(\ub^{h}))||+\sup_{\mu\in\Theta}|\Psi'(\ufomp)-\phi(\mup)|.
\end{multline*}
\final{We remark that both $\mathcal{E}_{R}$ and $\mathcal{E}_{P}$ can be made arbitrarily small. In fact, as proven by Pinkus back in 1999 \cite{pinkus}, DNNs are dense in the space of continuous functions defined over compact domains (note that here our assumption on $\rho$ is crucial). Therefore, since $\Theta$ is compact and $\mup\to\Psi'(\ufomp)$ is continuous, the parametric error can become as small as possible. Similarly, the reconstruction error can get closer and closer to the limit value $\delta_{n}(\solmanifoldh)=0$. In fact, we can approximate the reconstruction provided by any two continuous maps $\Psi_{*}':\solmanifoldh\to\mathbb{R}^{\rdim}$ and $\Psi_{*}:\mathbb{R}^{\rdim}\to\mathbb{R}^{\fomdim}$ using DNNs. To see this, fix any $\varepsilon>0$ and let $\mathcal{V}:=\Psi_{*}(\solmanifoldh)$ be the embedded solution manifold. Since $\mathcal{V}$ is compact, the aforementioned density result ensures the existence of some DNN $\Psi$ that approximates $\Psi_{*}$ over $\mathcal{V}$ upto an error of $\varepsilon$. Similarly, there exists a DNN $\Psi'$ that approximates $\Psi'_{*}$ over $\solmanifoldh$ upto an error of $\varepsilon/C$, where $C>0$ is the Lipschitz constant of $\Psi$. Then, for any $\ub^{h}\in\solmanifoldh$ one has \begin{multline*}
||\ub^{h}-\Psi(\Psi'(\ub^{h}))||\le\\\le||\ub^{h}-\Psi_{*}(\Psi'_{*}(\ub^{h}))||+||\Psi_{*}(\Psi'_{*}(\ub^{h}))-\Psi(\Psi'_{*}(\ub^{h}))||+||\Psi(\Psi'_{*}(\ub^{h}))-\Psi(\Psi'(\ub^{h}))||\le\\
\le ||\ub^{h}-\Psi_{*}(\Psi'_{*}(\ub^{h}))|| + \varepsilon + C|\Psi'_{*}(\ub^{h}))-\Psi'(\ub^{h})|\le\\\le||\ub^{h}-\Psi_{*}(\Psi'_{*}(\ub^{h}))||+2\varepsilon.
\end{multline*}
This shows that $\mathcal{E}_{R}$ can reach the limit value $\delta_{n}(\solmanifoldh)$. In particular, thanks to our design choice of letting $n=\mindim(\solmanifoldh)$, the reconstruction error can get arbitrarily close to zero.

In general, all the above reasoning suggests a two step approach where we first train the autoencoder $\Psi\circ\Psi'$ and then the reduced map $\phi$.} Nevertheless, before studying the two steps of the DL-ROM separately, some analysis of the networks complexity is needed. In fact, while the DL-ROM can reach any level of accuracy, the size of the networks involved may grow quickly, making their optimization problematic. The result below provides a \final{first} answer to such question.

\begin{theorem} Under Assumption \ref{assumption:dl-rom}, let $\rho$ be the ReLU activation function. Assume that the map 
$\mup\to\textnormal{\textbf{u}}_{\mup}^{h}$ is Lipschitz continuous for some constant $L>0$, and that the infimum in $\eqref{eq:nonlinear-kolmogorov}$ is attained, i.e. 
there exists two continuous maps $\Psi_{*}':\mathbb{R}^{\fomdim}\to\mathbb{R}^{n}$ and $\Psi_{*}:\mathbb{R}^{n}\to\mathbb{R}^{\fomdim}$ such that
$$\Psi_{*}(\Psi'_{*}(\textnormal{\textbf{u}}))=\textnormal{\textbf{u}}\quad\quad\forall\textnormal{\textbf{u}}\in\solmanifoldh.$$
Additionally, assume that $\Psi_{*}'$ and $\Psi_{*}$ are $s$-times differentiable, $s\ge2$, and have bounded derivatives. Let
$$C_{1}=\sup_{|\boldsymbol{\alpha}|\le s}\sup_{\textnormal{\textbf{u}}\in\mathbb{R}^{\fomdim}}|D^{\boldsymbol{\alpha}}\Psi'_{*}(\textnormal{\textbf{u}})|,
\quad\quad C_{2}=\sup_{|\boldsymbol{\alpha}|\le s}\sup_{\nub\in\mathbb{R}^{n}}||D^{\boldsymbol{\alpha}}\Psi_{*}(\nub)||.$$
For any $0<\varepsilon<1$, let $m\in\mathbb{N}$ be the first integer for which $d_{m}(\solmanifoldh)<\varepsilon$. Then, for some constant $c=c(\Theta, L,C_{1},C_{2},p,n,s)$, there exists a DL-ROM with a decoder $\Psi$ having at most
\begin{itemize}
    \item [i)] $cm^{1+n/(s-1)}\varepsilon^{-n/(s-1)}\log(m/\varepsilon)+m\fomdim$ active weights
    \item [ii)] $c\log(m/\varepsilon)$ layers
\end{itemize}
and a reduced map $\phi$ having at most
\begin{itemize}
    \item [iii)] $c\varepsilon^{-p}\log(1/\varepsilon)$ active weights
    \item [iv)] $c\log(1/\varepsilon)$ layers
\end{itemize}
such that the approximation error satisfies $\mathcal{E}_{A}<2\varepsilon$.
\end{theorem}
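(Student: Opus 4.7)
The plan is to build on the error decomposition $\mathcal{E}_{A}\le\mathcal{E}_{R}+\mathcal{E}_{P}$ already established in the excerpt, constructing $\phi$ and $\Psi$ independently so that each component error sits below $\varepsilon$. For the parametric network $\phi$, the target map $\mup\mapsto\Psi'_{*}(\ufomp)$ is Lipschitz on the compact set $\Theta\subset\mathbb{R}^{p}$ (composition of the Lipschitz parametric map of constant $L$ with $\Psi'_{*}$, whose first-order derivatives are controlled by $C_{1}$). After an affine rescaling of $\Theta$ into the unit cube, a standard Yarotsky-type ReLU approximation theorem for Lipschitz functions yields a network $\phi$ with $O(\varepsilon^{-p}\log(1/\varepsilon))$ active weights and $O(\log(1/\varepsilon))$ layers achieving uniform error $\varepsilon$, which gives $\mathcal{E}_{P}<\varepsilon$ together with the bounds (iii)--(iv).

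The construction of $\Psi$ is more delicate, because a direct ReLU approximation of $\Psi_{*}:\mathbb{R}^{n}\to\mathbb{R}^{\fomdim}$ would introduce an explicit $\fomdim$-dependence in the nonlinear part of the complexity. Here the key is to exploit the linear reducibility hypothesis $d_{m}(\solmanifoldh)<\varepsilon$. Let $V\in\mathbb{R}^{\fomdim\times m}$ be the matrix whose columns form an orthonormal basis of an $m$-dimensional subspace realising (up to $\varepsilon$) the best $m$-term linear approximation of $\solmanifoldh$, and set $P:=VV^{T}$. Define $\tilde{\Psi}_{*}:\mathbb{R}^{n}\to\mathbb{R}^{m}$ by $\tilde{\Psi}_{*}(\nub):=V^{T}\Psi_{*}(\nub)$. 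For any $\ub^{h}\in\solmanifoldh$, exploiting $\Psi_{*}(\Psi'_{*}(\ub^{h}))=\ub^{h}$, one has $\|V\tilde{\Psi}_{*}(\Psi'_{*}(\ub^{h}))-\ub^{h}\|=\|P\ub^{h}-\ub^{h}\|<\varepsilon$. Next, $\tilde{\Psi}_{*}$ is $s$-times differentiable with derivatives bounded in terms of $C_{2}$, so I would apply Yarotsky-type bounds coordinatewise, approximating each component to accuracy $\varepsilon/\sqrt{m}$ and stacking the resulting networks into a single ReLU DNN $\tilde{\Psi}:\mathbb{R}^{n}\to\mathbb{R}^{m}$ with $O(m^{1+n/(s-1)}\varepsilon^{-n/(s-1)}\log(m/\varepsilon))$ weights and $O(\log(m/\varepsilon))$ layers. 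I would then set $\Psi:=V\tilde{\Psi}$, realised by appending to $\tilde{\Psi}$ a single linear (unactivated) output layer carrying the $m\fomdim$ entries of $V$. A triangle inequality on $\Psi_{*}$, $V\tilde{\Psi}_{*}$, and $V\tilde{\Psi}$ then yields $\mathcal{E}_{R}<\varepsilon$ after the intermediate tolerances are chosen as $\varepsilon/2$ and constants absorbed into $c$, whence $\mathcal{E}_{A}<2\varepsilon$.

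Two minor points require care. First, the global Lipschitz constant of $\Psi$ must equal $1$; as observed in the paragraph preceding the theorem, this can be enforced at no cost by rescaling $\Psi_{*}$ and $\Psi'_{*}$ before the approximation, and the rescaling factor is absorbed into $c$. Second, and this is the main obstacle in my view, the exponent $n/(s-1)$ is tied to the specific Yarotsky-type rate invoked: one has to select a quantitative result for ReLU approximation of $C^{s}$ maps with $O(\log(1/\varepsilon))$ depth (in the style of Yarotsky, or Gühring--Kutyniok--Petersen) whose exact exponent matches the statement, and then carefully propagate the coordinatewise accuracy $\varepsilon/\sqrt{m}$ through both the polynomial factor and the logarithm, absorbing all polynomial dependencies on $m, n, p, L, C_{1}, C_{2}$ into the constant $c$.
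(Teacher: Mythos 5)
Your plan reproduces the paper's proof almost line by line: the orthonormal matrix $\mathbf{V}\in\mathbb{R}^{\fomdim\times m}$ realizing the $m$-width up to $\varepsilon$, the reduced decoder $F:=\mathbf{V}^T\Psi_*$ with derivatives controlled by $C_2$, a coordinatewise ReLU approximation $\psi$ of $F$, the decoder $\Psi:=\mathbf{V}\psi$ with an unactivated output layer carrying the $m\fomdim$ entries of $\mathbf{V}$, and a second ReLU network $\phi$ approximating $\phi_*(\mup):=\Psi'_*(\textbf{u}_{\mup}^h)$ via a Lipschitz-function rate. The one point you explicitly defer is the only place your sketch does not quite close on its own terms, and it is worth spelling out how the paper handles it. The rescaling $\tilde{\Psi}'(\cdot)=C\Psi'(\cdot)$, $\tilde{\Psi}(\cdot)=\Psi(\cdot/C)$ applies to the \emph{trained} networks, not to $\Psi_*'$, $\Psi_*$; applying it to the ideal maps before approximation stretches the latent set $\Nu:=\Psi'_*(\solmanifoldh)$ by a factor $C$, and the Gühring--Kutyniok--Petersen complexity bounds depend polynomially on the diameter of the approximation domain, so the cost is not trivially absorbed. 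More fundamentally, what you actually need to control is the Lipschitz constant of the \emph{approximant} $\psi$, and a pure sup-norm approximation theorem does not give you that. The paper resolves both issues at once by invoking Theorem 4.1 of Gühring--Kutyniok--Petersen in its $W^{1,\infty}$ form, which yields simultaneously $\sup_{\Nu}|F_j-\psi_j|\le\varepsilon/(2m)$ and a bound on the Lipschitz seminorm of $F_j-\psi_j$, whence $\mathrm{Lip}(\psi)\le C_2+1$ with no rescaling; the exponent $n/(s-1)$ (rather than $n/s$) is precisely the price of this one extra derivative of control. It then bounds $\mathcal{E}_A\le\sup_{\Nu}\|\Psi_*-\Psi\|+(C_2+1)\sup_{\Theta}|\phi_*-\phi|$ and chooses the per-component tolerance of $\phi$ as $\varepsilon/\bigl(2n(C_2+1)\bigr)$, arriving at $\mathcal{E}_A<2\varepsilon$ without ever normalizing $\mathrm{Lip}(\Psi)$ to $1$. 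Once you replace your rescaling step with that Lipschitz-aware approximation theorem, your argument and the paper's coincide.
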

\begin{proof}
Let $\Nu:=\Psi'_{*}(\solmanifoldh)\subset\mathbb{R}^{n}$ be the embedded solution manifold. The latter is a compact subset of diameter at most $\diam(\Nu)\le LC_{1}\diam(\Theta)$. Let $m$ be as in the Theorem. 
Then there exists an orthonormal matrix $\mathbf{V}\in\mathbb{R}^{\fomdim\times m}$ such that $||\ub-\mathbf{V}\mathbf{V}^{T}\ub||<\varepsilon$ for all $\ub\in\solmanifoldh$. Define $F:\mathbb{R}^{n}\to\mathbb{R}^{m}$ as $F(\nub):=\mathbf{V}^{T}\Psi_{*}(\nub)$. Then $F$ is $s$-times differentiable and
$$\sup_{\nub\in\Nu}|D^{\boldsymbol{\alpha}}F(\nub)|\le\sup_{\nub\in\Nu}||D^{\boldsymbol{\alpha}}\Psi_{*}(\nub)||\le C_{2},$$
for any multi-index $\boldsymbol{\alpha}$ with $0\le|\boldsymbol{\alpha}|\le s$. In particular, as a direct consequence of Theorem 4.1 in \cite{guhring}, there exists a ReLU DNN $\psi:\mathbb{R}^{n}\to\mathbb{R}^{m}$ of depth at most $C\log(m/\varepsilon)$ and active weights at most 
$mC(m\varepsilon)^{-n/(s-1)}\log(m/\varepsilon)$ such that for all $j=1,\dots,m$ one has
$$\sup_{\nub\in\Nu}|F_{j}(\nub)-\psi_{j}(\nub)|\le\frac{\varepsilon}{2m},\quad\quad \esssup_{\nub,\;\nub'\in\Nu}\;\frac{|(F_{j}-\psi_{j})(\nub)-(F_{j}-\psi_{j})(\nub')|}{|\nub-\nub'|}\le\frac{\varepsilon}{2m}$$
where $C>0$ is a constant depending on $C_{2},n,s$ and $\diam(\Nu)$ (thus on $\Theta,L$ and $C_{1}$), wherease $F_{j}$ and $\psi_{j}$ are the $j$th components of the two vector-valued maps. In particular, we also have a control on the Lipschitz constant of $\psi$, which we can bound by $C_{2}+\varepsilon/(2m)\le C_{2}+1$. We now define the decoder DNN $\Psi(\nub):=\mathbf{V}\psi(\nub)$ as a network with no activation on the output layer, so that for any $\nub\in\Nu$ we have
\begin{multline*}
    ||\Psi_{*}(\nub)-\Psi(\nub)||\le||\Psi_{*}(\nub)-\mathbf{V}\mathbf{V}^{T}\Psi_{*}(\nub)||+||\mathbf{V}\mathbf{V}^{T}\Psi_{*}(\nub)-\Psi(\nub)||\le\\\le\varepsilon + |F(\nub)-\psi(\nub)|\le\frac{3}{2}\varepsilon
\end{multline*}
as $\Psi_{*}(\nub)\in\solmanifoldh$. Clearly, $\Psi$ has the same depth of $\psi$ up to one layer, and it comes with $mN_{h}$ additional weights. Define now $\phi_{*}:\mathbb{R}^{p}\to\mathbb{R}^{n}$ as $\phi_{*}(\mup):=\Psi'_{*}(\ufomp)$. Then $\phi_{*}$ is Lipschitz continuous with constant at most equal to $LC_{1}$ and is bounded in norm by $C_{1}$. Thus, we can apply again Theorem 4.1 in \cite{guhring}, this time with respect to the infinity norm, to obtain a DNN $\phi:\mathbb{R}^{p}\to\mathbb{R}^{n}$ such that for all $\mup\in\Theta$
$$|\phi_{*,j}(\mup)-\phi_{j}(\mup)|<\frac{\varepsilon}{2n(C_{2}+1)}\quad\quad\forall j=1,\dots,n$$
where $\phi$ has at most $n\tilde{C}(nC_{2}\varepsilon)^{-p}\log(nC_{2}/\varepsilon)$ weights and at most $\tilde{C}\log(nC_{2}/\varepsilon)$ layers, $\tilde{C}$ being a constant only dependent on $\Theta,L$ and $C_{1}$. In particular, if we let $\tilde{C}$ absorb the dependence with respect to $n$ and $C_{2}$, and we set $c:=\max\{C,\tilde{C}\}$, then the architectures of $\Psi$ and $\phi$ satisfy the claimed requirements. Finally, we note that $\ufomp=\Psi_{*}(\Psi'_{*}(\ufomp))=\Psi_{*}(\phi_{*}(\mup))$, due to our hypothesis on the perfect embedding. Therefore, the approximation error for the DL-ROM with decoder $\Psi$ and reduced map $\phi$ is bounded by
\begin{multline*}
 \mathcal{E}_{A}\le\sup_{\mup\in\Theta}||\Psi_{*}(\phi_{*}(\mup))-\Psi(\phi_{*}(\mup))||+\sup_{\mup\in\Theta}||\Psi(\phi_{*}(\mup))-\Psi(\phi(\mup))||\le\\\le 
 \sup_{\nub\in\Nu}||\Psi_{*}(\nub)-\Psi(\nub)||+(C_{2}+1)\sup_{\mup\in\Theta}|\phi_{*}(\mup)-\phi(\mup)|\le\\
 \le \frac{3}{2}\varepsilon + (C_{2}+1)\frac{\varepsilon}{2(C_{2}+1)}=2\varepsilon.
\end{multline*}
\end{proof}

\noindent Theorem 5 suggests that the DL-ROM approach can take advantage of intrinsic regularities in the solution manifold, even if the parameter-to-solution map is just Lipschitz continuous. This situation reflects the case in which although the solutions depend in a complicated way with respect to the parameters, the solution operator has good analytical properties. For instance, it is known that the solution operator of elliptic PDEs is analytic with respect to the coefficients \cite{babuska,hoffmann}. Thus, we can think of $\mup\to\phi(\mup)$ as a change of coordinates that enables a smooth description of the solutions. 

Secondly, we note that an important role is played by the parameter $m$. This is in agreement with other results in the literature, see e.g. Theorem 4.3 in \cite{kutyniok}, and it suggests a link between the DL-ROM complexity and the linear Kolmogorov $m$-width. We may interpret $m$ as an \textit{equivalent linear dimension}: in fact, the DL-ROM accuracy in Theorem 5 is roughly equivalent to the optimal one achievable via projections on $m$-dimensional subspaces. In this sense, we can think of $m$ as being the number of modes in a Reduced Basis approach or, analogously, the number of trunk nets in a DeepONet based ROM \cite{karniadakis,mishra2}. In the case of DL-ROMs, the value of $m$ does not affect the latent dimension but has an impact on the DNNs complexity: the slower $d_{m}(\solmanifoldh)$ decays, the more degrees of freedom in the DNN architecture and, consequently, the higher the number of training snapshots required for the optimization. Conversely, if the linear width decays mildly, then a mix of linear and nonlinear reduction may be an interesting choice, as in the recently proposed POD-DL-ROM approach \cite{fresca}.

Nevertheless, we mention that the complexity bounds for the decoder are suboptimal in the way they include the FOM dimension $\fomdim$. In fact, the extra contribute $mN_{h}$ comes from the choice of considering the state space $V_{h}$ as consisting of vectors rather than functions. In particular, we expect that better estimates can be found if the solutions are smooth with respect to the space variable $\x\in\Omega$. This goes in favor of architectures that explicitly account for space dependency, such as convolutional layers, or even mesh-free approaches, such as DeepONets.}

\subsubsection{Dimensionality reduction}
\label{sec:ae}
\review{We propose two alternative ways for compressing the solution manifold. The first one is completely unsupervised, in the sense that it only operates on the solutions irrespectively of the parameter values, and it is based on the use of autoencoders. The second one is a variation of the previous where we explicitly include $\mup$ in the encoding process. We detail them below.
\\
\paragraph{\textit{Autoencoder approach}} According to the reasoning in Section \ref{sec:dlrom}, we let $n:=\mindim(\solmanifoldh)$ and we introduce two DNN architectures, an encoder $\Psi':\mathbb{R}^{\fomdim}\to\mathbb{R}^{n}$ and a decoder $\Psi:\mathbb{R}^{n}\to\mathbb{R}^{\fomdim}$, which we design as follows. In principle, the encoder can be very simple, as its only purpose is to provide a different representation for each solution. The hard job is left to the decoder that needs to perform the reconstruction. In this sense, a plain design choice can be $\Psi'(\ub):=\rho\left(\weight\ub+\bias\right)$, i.e. to use a degenerate architecture with no hidden layers. Conversely, designing the decoder requires a little extra caution. If $\Omega$ is an hypercube, a good choice is to employ dense layers at the beginning and conclude with a block of convolutional layers as done in \cite{fresca,lee}. This allows the decoder to account for spatial correlations and be sufficiently expressive without growing too much in complexity. Indeed, convolutional layers have been proven to be very effective in image reconstruction tasks, and we see a clear analogy with our setting when $\Omega$ is an hypercube. More complicated geometries may require different strategies, but the terminal part of the decoder should still consist of sparse layers of some sort (such as those in Graph Convolutional Networks \cite{scarselli}). The expressiveness of the decoder may be increased in several ways. Empirically, we see that interesting results can be obtained for fixed depths but varying number of \textit{channels} in the convolutional layers.

Once the architecture has been fixed, we optimize the autoencoder by minimizing the loss function below
$$\lossf(\encoder,\decoder)=\frac{1}{\ntrain}\sum_{i=1}^{\ntrain}\loss(\ui,\;\decoder(\encoder(\ui))),$$
where $\loss$ is a suitable measure of discrepancy. A classical choice is to consider squared errors, $\loss(\y,\hat{\y})=||\y-\hat{\y}||^{2}$, in order to favor differentiability of the loss function. However, other metrics, such as relative errors $\loss(\y,\hat{\y})=||\y-\hat{\y}||/||\y||$, can be used as well. The minimization of the loss function is handled via stochastic gradient descent, mainly using batching strategies and first order optimizers.
\\
\paragraph{\textit{Transcoder-decoder approach}} As an alternative, we also propose a different architecture where the encoder is replaced with a \textit{transcoder} $\transcoder:\mathbb{R}^{p}\times\mathbb{R}^{\fomdim}\to\mathbb{R}^{n}$. The idea is to facilitate the encoding by making explicitly use of the parameters, so that different solutions are more likely to have different latent representations. This is clearly linked with Theorem \ref{theorem:parametric-reduction}.b1, and has the advantage of always enabling a maximal reduction, as we can now set $n=p=\mindim(\{\mup,\ufomp\}_{\mup\in\Theta})$. We define the decoder exactly as before, so that $\ufomp\approx\Psi(\transcoder(\mup,\ufomp))$. We refer to the combined architecture, $\Psi\circ\transcoder$, as to a \textit{transcoder-decoder}. In practice, the transcoder-decoder is analogous to an autoencoder but has $p$ additional neurons in the input layer, which is where we pass the parameters. To design the architectures, we follow the same rule of thumb as before. In general, we give more weight to the decoder, where we employ deep convolutional networks, while we use lighter architectures for the transcoder. For instance, in the limit case of 0-depth, the latter becomes of the form $\transcoder(\mup,\ub) = \rho\left(\weight'\mup + \weight\ub + \bias\right)$. During the offline stage, the transcoder-decoder is trained over the snapshots by minimizing the loss function below,
$$\lossf(\transcoder,\decoder)=\frac{1}{\ntrain}\sum_{i=1}^{\ntrain}\loss(\ui,\;\decoder(\transcoder(\mui,\ui))),$$
where $\loss$ is as before. The two approaches, autoencoder and transcoder-decoder, adopt different perspectives and provide different advantages. The first one is completely based on the solution manifold, so it is likely to reflect intrinsic properties of $\solmanifoldh$. On the other hand, the transcoder-decoder ensures a maximal compression, the latent dimension being always equal to $p$. In particular, the latent coordinates can be seen as an alternative parametrization of the solution manifold. In this sense, we say that $\transcoder$ performs a transcoding.}

\subsubsection{Approximation of the reduced map}
\label{sec:cc}
The second step in the DL-ROM pipeline is to approximate the reduced map $\mathbb{R}^{p}\ni\mup\to\uromp\in\mathbb{R}^{\rdim}$, where either $\uromp:=\encoder(\ufomp)$ or $\uromp:=\transcoder(\mup, \ufomp)$, depending on the adopted approach. 
\review{As we noted in Section \ref{sec:dlrom}, the reduced map is continuous, as it is given by the composition of $\mup\to\ufomp$ and $\encoder$ (resp. $\transcoder$), hence it can be approximated uniformly by some $\rho$-DNN $\phi:\mathbb{R}^{p}\to\mathbb{R}^{n}$}. In general, we do not impose a particular structure on $\phi$, rather we use a generic fully connected network with dense layers.
To design the architecture in terms of number of layers and neurons, we rely on \review{Theorem 5 and on the underlying} theoretical results available in the literature, e.g. \cite{bolcskei,daubechies,guhring2,park,petersen,siegel}.

\begin{algorithm}[H]
\SetAlgoLined
\normalem
\SetKwInOut{Input}{Input}
\SetKwInOut{Output}{Output}
\vspace{0.25em}
\Input{
Training snapshots $\{\mup_{i}, \ufompi\}_{i=1}^{N}$, reduced dimension $\rdim$, optimizers \texttt{Optimizer}$_{1}$, \texttt{Optimizer}$_{2}$, number(s) of epochs $E_{1}$, $E_{2}$, batch size(s) $S_{1}$, $S_{2}$, encoding type \texttt{useparameters} (boolean), discrepancy measures $\loss_{1}$, $\loss_{2}$.\vspace{0.25em}
}

\Output{
Neural network $\Phi$ approximating the parametric map.\vspace{0.75em}
}

$\encoder_{0}, \decoder_{0}\leftarrow$ Initialize encoder/transcoder and decoder with latent dimension $n$

$e\leftarrow0$ Initialize epochs counter

$B_{1}\leftarrow N/S_{1}$ \quad\textit{// number of batches}

\eIf{\textnormal{\texttt{useparameters}}}{
   \vspace{0.5em}    
   $\vb_{i} \leftarrow [\mup_{i}, \ufompi]$ \quad\textit{// transcoder case}
   }{
   $\vb_{i} \leftarrow \ufompi$ \quad\textit{// encoder case}
  }

 \While{\textnormal{$e < E_{1}$}}{
  \vspace{0.5em}
  shuffle training data $\{\vb_{i}, \ufompi\}_{i=1}^{N}$\;
  \For{\textnormal{$m = 1:B_{1}$}}{
        \vspace{0.5em}
        $\vb^{\text{batch}}\leftarrow[\vb_{(m-1)S_{1}+1},..,\vb_{mS_{1}}]$
        
        \vspace{0.5em}
        
        $\ub^{\text{batch}}\leftarrow[\ufom_{\mup_{(m-1)S_{1}+1}},..,\ufom_{\mup_{mS_{1}}}]$
        \newline
        
        loss $\leftarrow \frac{1}{S_{1}}\sum_{i=1}^{S_{1}}\loss_{1}\left(\ub^{\text{batch}}_{i}, \Psi_{eB_{1}+m-1}(\Psi'_{eB_{1}+m-1}(\vb^{\text{batch}}_{i}))\right)$ 
        
        \vspace{0.5em}
        
        $\encoder_{eB_{1}+m},\decoder_{eB_{1}+m}\leftarrow\texttt{Optimizer}_{1}(\text{loss}, \encoder_{eB_{1}+m-1}, \decoder_{eB_{1}+m-1})$
  }
  $e\leftarrow e+1$
 }
 \vspace{0.75em}
 $\phi_{0}\leftarrow $ Initialize reduced map DNN
 
 $e\leftarrow0$ Reset epochs counter
 
 $B_{2}\leftarrow N/S_{2}$ \quad\textit{// number of batches}
 
 $\urompi\leftarrow \encoder(\vb_{i})$    \quad\textit{// define training data for $\phi$}
 
  \While{\textnormal{$e < E_{2}$}}{
  \vspace{0.5em}
  shuffle training data $\{\mup_{i}, \urompi\}_{i=1}^{N}$\;
  \For{\textnormal{$m = 1:B_{2}$}}{
        \vspace{0.5em}
        $\mup^{\text{batch}}\leftarrow[\mup_{(m-1)S_{2}+1},..,\mup_{mS_{2}}]$
        
        \vspace{0.5em}
        
        $\ub^{n,\text{batch}}\leftarrow[\urom_{\mup_{(m-1)S_{2}+1}},..,\urom_{\mup_{mS_{2}}}]$
        \newline
        
        loss $\leftarrow \frac{1}{S_{2}}\sum_{i=1}^{S_{2}}\loss_{2}\left(\ub^{n,\text{batch}}_{i}, \phi_{eB_{2}+m-1}(\mup^{\text{batch}}_{i})\right)$ 
        
        \vspace{0.5em}
        
        $\phi_{eB_{2}+m}\leftarrow\texttt{Optimizer}_{2}(\text{loss}, \phi_{eB_{2}+m-1})$
  }
  $e\leftarrow e+1$
 }
 \vspace{0.75em}
 $\Phi\leftarrow\decoder_{E_{1}B_{1}}\circ\phi_{E_{2}B_{2}}$
 
 \Return{$\Phi$}

 \caption{DL-ROM training.\vspace{0.25em}}
 
\end{algorithm}

\review{In order to train $\phi$ we minimize the objective function below
$$\lossf(\phi)=\frac{1}{\ntrain}\sum_{i=1}^{\ntrain}\loss(\uri,\;\phi(\mui))$$
where, once again, $\loss$ is some discrepancy measure (this time having inputs in $\mathbb{R}^{n}\times\mathbb{R}^{n})$. Notice that the optimization of the above only involves $\phi$, as the weights and biases of $\encoder$ (resp. $\transcoder)$ are frozen.}

At the end of the whole process, which we summarized in Algorithm 1, we let $\Phi:=\decoder\circ\phi$. Now the DL-ROM is fully operational, and for each new $\mup\in\Theta$ we can approximate online the corresponding solution $\Phi(\mup)\approx\ufomp$ almost effortlessly, with very little computational cost. \review{Also, the model can be efficiently evaluated on multiple parameter values simultaneously. In fact, as DNNs are ultimately based on elementary linear algebra, it possible to stack together multiple parameter vectors $\mathbf{M}=[\mup_{1},\dots,\mup_{l}]$ in a single matrix and directly return the corresponding list of ROM approximations $\Phi(\mathbf{M})\approx[\ub_{1}^{h},\dots,\ub_{l}^{h}]$.}

\noindent\newline\textit{Remark.\;} We mention that, in the case $n=p$, an interesting alternative for $\phi$ could be provided by the so-called ODE-nets \cite{chen}. In fact, if the reduced map happens to be injective, then $\Theta$ and $\{\uromp\}_{\mu\in\Theta}$ define two homeomorphic sets of coordinates. Even though homeomorphisms can be approximated by classical DNNs, we note that fully connected unconstrained networks can easily result in noninvertible models. In this sense, an alternative architecture which ensures the existence and continuity of $\phi^{-1}$ would be appealing. ODE-nets enjoys such property and have been proven to be universal approximators for homeomorphisms \cite{zhang}. However, the development and implementation of ODE-nets is still in its infancy so we did not investigate this further.

\section{Numerical experiments}
\label{sec:experiments}
We now present some numerical results obtained with our DL-ROM approach. So far, neural networks have shown remarkable performances in the approximation of the parametric map at least in those contexts where classical POD-based methods succeed, e.g. \cite{bhattacharya,geist}. There is now an increasing interest in understanding how and if NNs can be of help in more challenging situations. In the case of transport problems, some theoretical and numerical results are now appearing in the literature, see respectively \cite{laakman} and \cite{fresca}.

Here, we focus on parameter dependent second order elliptic PDEs. The first test case concerns an advection-diffusion problem with a singular source term. The PDE depends on 7 scalar parameters which affect the equation both in a linear and nonlinear fashion. We consider two variants of the same problem, one of which is transport-dominated. As second test case, we consider a stochastic Poisson equation. The main difference with respect to the previous case is that the equation is parametrized by a stochastic process, and the PDE formally depends on an infinite-dimensional parameter. In order to apply the DL-ROM approach, we consider a suitable truncation of the Karhunen–Loève expansion of the stochastic process.

\review{In all our experiments we consider $V=L^{2}(\Omega)$ as state space, and we quantify the ROM performance via the Mean Relative Error (MRE)
\begin{equation}
    \label{eq:mre}
    \mathbb{E}_{\mup\sim\mathcal{P}}\left[\frac{||\ufomp-\Phi(\mup)||_{L^{2}(\Omega)}}{||\ufomp||_{L^{2}(\Omega)}}\right],
\end{equation}
where $\Phi=\Psi\circ\phi$ is the DL-ROM network, and $\mathcal{P}$ is some probability measure defined over the parameter space $\Theta$. We estimate \eqref{eq:mre} with a Monte Carlo average computed over 1000 unseen snapshots (test set). \final{To evaluate whether there is a gap in performance between training and testing, we also compute the MREs over the training set. For an easier comparison, in all our experiments, we fix the latter to have size $\ntrain=9000$.}}
\newline\newline
All our experiments were implemented in Python 3 and ran over GPUs. Specifically, we used the FEniCS library\footnote{https://fenicsproject.org/} to run the FOM and obtain the high-fidelity snapshots, while the construction and the training of the DL-ROM was handled in Pytorch\footnote{https://pytorch.org/}.

\subsection{Stationary advection-diffusion with singular source}
\label{sec:exp1}

\subsubsection{Problem definition}
On the spatial domain $\Omega = (0,1)^{2}$, we define the subdomains $\{\Omega_{i}\}_{i=0}^{4}$ as in Figure \ref{fig:domain-decomp}. We consider the following parameter dependent PDE in weak form
\[
\begin{multlined}
\frac{1}{10}\int_{\Omega_{0}}\nabla u\cdot\nabla w
+\sum_{i=1}^{4}\int_{\Omega_{i}}\mu_{i}\nabla u\cdot\nabla w
+ C\int_{\Omega}\left(\cos{\mu_{5}}\frac{\partial u}{\partial x_{1}}w
+ \sin{\mu_{5}}\frac{\partial u}{\partial x_{2}}w\right) = \\\\ = w(\mu_{6},\mu_{7})\quad\forall w\in \mathcal{C}^{\infty}_{0}(\Omega)\quad\quad
\end{multlined}\]
\review{with Dirichlet boundary condition $u_{|\Omega}=1$.} The above corresponds to a stationary advection-diffusion equation where: the conductivity field $\sigma_{\mup}:=0.1 + \sum_{i=1}^{4}\mu_{i}\textbf{1}_{\Omega_{i}}$ is piecewise constant with values that change parametrically within the circular subdomains; the transport field $\advep:=(C\cos\mu_{5},C\sin\mu_{5})$ has a parametrized direction while it is uniform in space and has a fixed intensity $C>0$; finally, the source term $\forcep$ is a Dirac delta located at the parameter dependent coordinates $(\mu_{6}, \mu_{7})$. \review{Globally, the PDE depends on 7 parameters that we consider to be varying in the parameter space $\Theta = [0,1]^{4}\times[0, 2\pi]\times[0.1, 0.9]^{2}$, which we endow with a uniform probability distribution $\mathcal{P}$}. We note that the PDE does not admit solutions in $H^{1}(\Omega)$ because of the singularity introduced by the Dirac delta. Nevertheless, the variational problem is well-posed in the Banach space $W^{1,4/3}(\Omega)\hookrightarrow L^{2}(\Omega)$, see e.g. \cite{casas}. We are hence allowed to consider the solution manifold $\solmanifold:=\{\up\}_{\mup\in\Theta}$ as a subset of the Hilbert space $L^{2}(\Omega)$.

We analyze two different settings. In the first case we fix the transport field intensity to be $C=0.5$, so that the diffusion and the advection act over the same scale. Then, we consider a transport-dominated case where $C=40$.
\begin{figure}
  \begin{minipage}[c]{0.4\textwidth}
    \includegraphics[width=\textwidth]{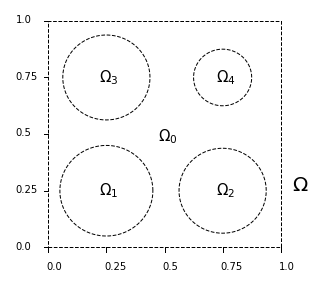}
  \end{minipage}\hfill
  \begin{minipage}[b]{0.6\textwidth}
    \caption{
       Decomposition of the unit square %
       $\Omega=(0,1)^{2}$ according to the conventions adopted in the first numerical experiment, Section \ref{sec:exp1}.
       \vspace{-5.5em}
    } \label{fig:domain-decomp}
  \end{minipage}
\end{figure}

\subsubsection{Discretization and Full Order Model}
As FOM, we employ Lagrange piecewise linear finite elements over a triangular mesh. Prior to the discretization, we provide a Gaussian approximation of the Dirac delta as
$$\forcep^{\epsilon}(x_{1},x_{2}):=\frac{1}{2\pi\epsilon^{2}}\exp\left({\frac{-(x_{1}-\mu_{6})^{2}-(x_{2}-\mu_{7})^{2}}{2\epsilon^{2}}}\right).$$
We shall write $\up^{\epsilon}$ for the solutions of this smoothed problem and $\solmanifold^{\epsilon}$ for the corresponding solution manifold. We see that the following claim holds (for the details see the Appendix, Section \ref{appendix:exp})
\\\\\noindent\textbf{Claim 1. }$\sup_{\mup\in\Theta}||\up-\up^{\epsilon}||_{L^{2}(\Omega)}\to0$ as $\epsilon\to0$.\\\\
\noindent In particular, $\solmanifold^{\epsilon}$ approximates $\solmanifold$ uniformly.
From here on, we shall fix $\epsilon = 1/420$ and formally replace $\solmanifold$ with $\solmanifold^{\epsilon}$. Next, we discretize the variational problem through P1-Finite Elements over a triangular mesh. Using the classical estimates from the FEM theory, e.g. \cite{quarteroni-fem}, it is not hard to see that Assumption \ref{assump:high-fidelity} is satisfied within the state space $L^{2}(\Omega)$. Here, we fix the mesh size to be $h = 1/210$, which results in a high-fidelity space $V_{h}\cong\mathbb{R}^{N_{h}}$ of dimension $N_{h}=44521$. We exploit the FOM to generate respectively $N_{train}=9000$ and $N_{test}=1000$ random snapshots. 

\subsubsection{DL-ROM design and training}
In the construction of the DL-ROM, we do not make a distinction between the case of mild and strong advection, respectively $C = 0.5$ and $C=40$. In this way, we can see more clearly whether the intensity of the transport field affects the ROM performance. For the dimensionality reduction, we explore both the two alternatives presented in Section \ref{sec:ae}. For the autoencoder, we choose to consider the original solution manifold as a reference for the latent dimension, thus we let $\rdim:=\mindim(\solmanifold)$. Thanks to Theorem \ref{theorem:parametric-reduction}, the claim below holds true. 
\\\\\noindent\textbf{Claim 2. } $\mindim(\solmanifold) = p = 7$.\\\\
\noindent The proof is straightforward and we leave the details to the Appendix, Section \ref{appendix:exp}. Note that in this way, regardless of the encoding strategy, we are fixing the reduced dimension to be $n=7$. Since $N_{h}=44521$, this corresponds to a compression of almost 99.98\%. 

{\small
\begin{table}
\caption{\label{tab:architectures1} 
Architectures for the Advection-Diffusion problem (Section \ref{sec:exp1}). Tables (a) and (b) refer to the encoder and transcoder, respectively. Table (c) reports the decoder architecture. Here Transp. Conv. = Transposed Convolutional, denotes those sparse layers whose linear part is described in terms of 2D transposed convolutions. Therein, the input and output dimensions are written in the form $channels\times height\times width$, as each vector is reshaped in a 3D tensor. The hyperparameters kernel (which stands for \textit{kernel size}) and \textit{stride} determine the characteristics of the transposed convolution. For a more detailed explanation we refer to the Pytorch library documentation. Note that, up to reshaping, the output has dimension $1\cdot211\cdot211=N_{h}$. The network architecture is given in terms of a hyperparameter $m\in\mathbb{N}$, through which we tune the decoder complexity. Table (d) refers to the change of coordinates DNN, $\phi$. All networks and layers use the 0.1-leaky ReLU activation.}

\begin{flushleft}
a) $\encoder\hspace{15em}$b) $\transcoder$\vspace{0.25em}\newline
 \begin{tabular}{||l l l l||} 
 \hline
 Layer & Input & Output & dof\\ [0.5ex] 
 \hline\hline
 Dense & 44521 & 7 & 311654\\ 
 \hline
\end{tabular}
\quad
 \begin{tabular}{||l l l l||} 
 \hline
 Layer & Input & Output & dof\\ [0.5ex] 
 \hline\hline
 Dense & 44528 & 7 & 311703\\ 
 \hline
\end{tabular}

\vspace{0.5em}
 c) $\decoder$\vspace{0.25em}
 \newline
 \begin{tabular}{||l l l l l l||} 
 \hline
 Layer & Input & Output & Kernel & Stride & dof\\ [0.5ex] 
 \hline\hline
 Dense & 7 & $288m$ & - & - & $2304m$\\ 
 \hline
 Transp. Conv. & $8m\times6\times6$ & $4m\times20\times20$ & 10 & 2 & $3200m^{2}+4m$\\ 
 \hline
 Transp. Conv. & $4m\times20\times20$ & $2m\times48\times48$ & 10 & 2 & $800m^{2}+2m$\\ 
 \hline
 Transp. Conv. & $2m\times48\times48$ & $m\times102\times102$ & 8 & 2 & $128m^{2}+m$\\ 
 \hline
 Transp. Conv. & $m\times102\times102$ & $1\times211\times211$ & 9 & 2 & $81m+1$\\ 
 \hline
\end{tabular}

\vspace{0.5em}
d) $\phi$
\vspace{0.25em}
\newline
\begin{tabular}{||l l l l||} 
 \hline
 Layer & Input & Output & dof\\ [0.5ex] 
 \hline\hline
 Dense & 7 & 1024 & 7168\\ 
 \hline
 Dense & 1024 & 512 & 524288\\ 
 \hline
 Dense & 512 & 256 & 131072\\ 
 \hline
 Dense & 256 & 7 & 1792\\ 
 \hline
\end{tabular}
\end{flushleft}
\end{table}
}

The networks architectures are detailed in Tables \ref{tab:architectures1}.a, \ref{tab:architectures1}.b (encoding step) and Table \ref{tab:architectures1}.c (decoding step). The encoder and the transcoder are particularly light, as they actually consist of a single dense layer. In contrast, $\decoder$ is far more complex, with a depth of $l=4$. \review{The proposed architecture is closely related to the ones adopted in \cite{fresca,lee}, upto to some specifics dictated by the problem itself (namely the fact the high-fidelity mesh consists of a $211\times211$ square)}. The decoder makes use of transposed convolutional layers, choice that is mainly motivated by two reasons: (i) convolutional layers correspond to sparse operators, and are more easy to deal with in the case of high-dimensional data, (ii) 2D convolutions best describe spatially localized behaviors so they are a natural choice when the data itself is defined on a spatial domain. We shall also remark that the decoder architecture is given in terms of a hyperparameter $m\in\mathbb{N}$, $m>0$, which controls the number of channels in the convolutional layers. This was done in order to investigate how the network complexity impacts the reconstruction error, and allows for a direct comparison with linear methods such as the POD. We analyze the performance of the networks for different values of $m$ separately, namely $m=4,8,16,32$.
\newline\indent
Prior to training, the networks are initialized differently depending on the encoding type. In the autoencoder case, we initialize both $\encoder$ and $\decoder$ accordingly to the (Gaussian) He initialization \cite{he}. Conversely, we initialize the transcoder in such a way that $\transcoder(\mup,\up)=\mup.$ This is equivalent to using the parameters as first guess for the intrinsic coordinates: then, during the training, and depending on the decoder needs, $\transcoder$ will have the possibility of finding other representations. \review{As discrepancy measure for the loss function, we use squared errors, $\mathcal{L}(\y,\hat{\y})=||\y-\hat{\y}||^{2}$}. We train the autoencoder (resp. transcoder-decoder) using the AdamW optimizer \cite{loshchilov}, with learning-rate $10^{-4}$, weight-decay $10^{-2}$, moments coefficients $\beta_{1}=0.99$, $\beta_{2}=0.999$ and adjustment $\varepsilon=10^{-8}$. We perform the gradient descent using batches of 50 and for a total of 1000 epochs. At the end of this first training session, we pick the best performing architecture and continue the construction of the DL-ROM from there.

Table \ref{tab:architectures1}.d reports the architecture for our third network, $\phi$. We initialize $\phi$ using the He initialization and proceed with its training accordingly to Section \ref{sec:cc}. \review{We consider again a loss function based on square errors}, and we perform the gradient descent using the same optimizer as before, only changing the learning rate to $10^{-3}$.

\subsubsection{Numerical results}
\begin{figure}
\includegraphics[width=\textwidth]{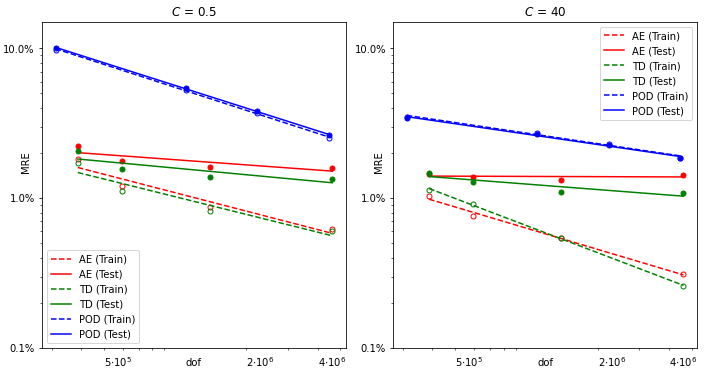}
\caption{
    Error decay in terms of network complexity for the Advection-Diffusion problem (Section \ref{sec:exp1}). Plots are reported in loglog scale. Case $C=0.5$ on the left, $C=40$ on the right. Lines are drawn by considering the least-square fit $\log\text{MRE} \approx \beta_{0}+\beta_{1}\log\text{dof}$. Dashed-lines are used for training errors and straight lines for test errors. In the case of autoencoders (AE, in red) and transcoder-decoders (TD, in green) the reported MREs correspond to the architectures in Table \ref{tab:architectures1} with $m=4,8,16,32$. POD-projection errors are reported in blue. POD degrees of freedom (dof) are computed as $nN_{h}$ and correspond to the number of entries in the projection matrix $\mathbf{V}$. Note that for POD the reduced dimension increases as the complexity grows. Conversely, the DL-ROM approach considers heavier and heavier networks, but the reduced dimension is always fixed to $n=p=7$.
}
\label{fig:dimreduction}
\end{figure}
\review{Figure \ref{fig:dimreduction} reports the results limited to the dimensionality reduction, that is the first step in the DL-ROM pipeline. There, we compare the performance of autoencoders, transcoder-decoders and POD in terms of model complexity. In general, regardless of whether $C=0.5$ or $C=40$, both nonlinear methods show interesting results, with training errors close or below 1\%. Unsurprisingly, as the networks grow in complexity, the gap between training and test errors becomes larger, highlighting the need for more samples and a tendency towards overfitting (see the autoencoder in case $C=40$). Still, transcoders seem to mitigate this phenomenon, possibly because they provide more information in the latent space.

For POD, the degrees of freedom are defined as the number of entries in the projection matrix $\mathbf{V}$, while the errors are computed as $||\ub-\mathbf{V}\mathbf{V}^{T}\ub||/||\ub||$ (relative projection error). In particular, the MREs reported in Figure \ref{fig:dimreduction} provide a lower-bound for all POD-based ROMs. Interestingly, all the curves show a similar trend. This goes to support our conjecture that the decoder complexity may be linked with the Kolmogorov $n$-width (Section \ref{sec:dlrom}). More precisely, linear methods can improve the accuracy by adding $\Delta n$ modes, i.e. $\Delta n N_{h}$ degrees of freedom, but they also have to increase the ROM dimension. Conversely, in the DL-ROM approach, we can obtain a similar boost by investing the same degrees of freedom in the decoder, without having to modify the latent dimension. This is in agreement with Theorem 5, where we proved that $\mathcal{O}(m^{1+n/(s-1)}\log(m))$ active weights are sufficient for the decoder to match the accuracy of any projection method with $m$ modes. Of note, if we assume the solution manifold to be infinitely smooth and we let $s\to+\infty$, then we may conjecture the decoder complexity to behave as $\mathcal{O}(m\log(m))$. As matter of fact, this is what we observe in the picture, at least for the training errors. In fact, if the red lines were to be perfectly parallel to the blue ones, that would reflect a scenario in which the decoder complexity grows as $\mathcal{O}(m)$.

On the same time, Figure \ref{fig:dimreduction} goes to show that the upper bounds in Theorem 5 are suboptimal. In fact, both in the case of mild and strong advection, the nonlinear reduction is able to outperform POD with fewer degrees of freedom, i.e. without the extra contribute $m\fomdim$ in the decoder. We believe that this achievement is made possible by the use of convolutional layers, but this is yet to be made formal.

\newcommand{\npod}{n_{\text{POD}}}

{\small
\begin{table}
 \begin{tabular}{||l l l l||} 
 \hline
$C$ & Data & MRE & Equivalent POD modes\\ [0.5ex] 
 \hline\hline
 0.5 & Train & 1.05\% & 428\\ 
 \hline
 0.5 & Test  & 2.01\% & 164\\ 
 \hline
 40  & Train & 0.31\% & more than 1000\\ 
 \hline
 40  & Test  & 1.23\% & 316\\ 
 \hline
\end{tabular}
\caption{\label{tab:errors1} \review{DL-ROM performance for the Advection-Diffusion problem (Section \ref{sec:exp1}). The final model $\Phi:=\Psi(\phi)$ was constructed by choosing the decoder with highest performance (cf. Figure \ref{fig:dimreduction}). Equivalent POD modes = minimum number of POD-modes needed by any POD-based ROM to outperform the DL-ROM.}}
\end{table}
}
\begin{figure}
\begin{flushleft}
\footnotesize{
a) Case $C=0.5$,\hspace{14.5em}b) Case $C=40$
}
\end{flushleft}
\includegraphics[width=\textwidth]{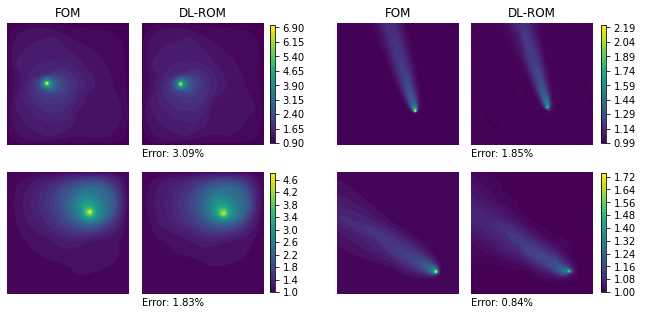}
\caption{
    DL-ROM results for the Advection-Diffusion problem (Section \ref{sec:exp1}). Panels (a) and (b) respectively refer to the case of mild and strong advection. In both cases, the picture shows two examples extracted from the test set (one for each row) and compares the high-fidelity solution (first column) with the DL-ROM approximation (second column). 
}
\label{fig:examples1}
\end{figure}
Let us now move to the actual approximation of the parametric map. To this end, we trained our third network $\phi$ on the basis of the best performing transcoder-decoder ($m=32$). Numerical results for the complete DL-ROM $\Phi:=\Psi\circ\phi$ are in Table \ref{tab:errors1} and Figure \ref{fig:examples1}. In general, the results are satisfactory, with test errors near 2\%. Both in the case of mild and strong advection, we note that POD-based ROMs require more than 300 modes to achieve the same accuracy. This makes intrusive ROMs, such as POD-Galerkin, too expensive to be used online. Conversely, the DL-ROM approach provides an appealing alternative. Indeed, while the whole offline stage took around 4 hours, the model is extremely fast when used online: solving the PDE for 1000 different values of the parameters (simultaneously) requires less than 2 milliseconds on GPU.}

\subsection{Stochastic Poisson equation}
\label{sec:exp2}
\subsubsection{Problem definition} On the spatial domain $\Omega = (0,1)^{2}$, we consider a Gaussian process $W$ with constant mean $w = -\log(10)$ and covariance kernel $\text{Cov}(\x,\y)=10\text{exp}(-4|\x-\y|^{2})$.
The latter is used to model the stochastic Poisson equation below,

$$
\begin{cases}
-\text{div}\left(\text{e}^{W(\omega)}\nabla u\right) = |\x|^{2} & \text{in }\Omega,
\\
u = 0  & \text{on }\partial\Omega.
\end{cases}
$$
\newline
Here, for each event $\omega$, the map $W(\omega):\Omega\to\mathbb{R}$ denotes the corresponding path of the stochastic process $W$. The above problem can be seen as a parameter dependent PDE that depends on (countably) infinite many parameters. To see this, we recall that there exist positive real numbers $\left\{\sqrt{\lambda_{i}}\right\}_{i=1}^{+\infty}$, orthonormal functions $\{\zeta_{i}\}_{i=1}^{+\infty}\subset L^{2}(\Omega)$ and independent standard gaussians $\{X_{i}\}_{i=0}^{+\infty}$ such that $$W = w+\sum_{i=1}^{+\infty}\sqrt{\lambda_{i}}X_{i}\zeta_{i}$$ almost surely. The latter is the so-called Karhunen-Loève expansion of $W$. We assume the $\lambda_{i}$ coefficients to be nonincreasing in $i$. In order to cast the problem into our framework, we approximate $W$ by truncating the aforementioned expansion at some index $k$. More precisely, we define $\Theta:=\mathbb{R}^{k}$ and $W_{\mup}^{k}$ as

$$W_{\mup}^{k}(x):= w+\sum_{i=1}^{k}\sqrt{\lambda_{i}}\mu_{i}\zeta_{i}(x).$$
\newline
Thanks to the usual continuity results and the convergence ensured by the  Karhunen-Loève expansion, the impact of this substitution on the PDE can be made arbitrarily small with $k$. \review{We note that, by construction, the probability distribution to be considered over the parameter space is the Gaussian distribution $\mathcal{P}$ of density
$$G(\mup):=\left(2\pi\right)^{-k/2}\text{e}^{-\frac{1}{2}|\mup|^{2}}.$$}

\subsubsection{Discretization and Full Order Model}

On $\Omega$ we define a triangular mesh of size $h=10^{-2}$, over which we construct the high-fidelity space of piecewise linear Finite Elements $V_{h}$. The corresponding FOM dimension is $N_{h}=10121$. To approximate the Karhunen-Loève expansion of $W$, we project and solve over $V_{h}$ the following eigenvalue problem.

$$\int_{\Omega}\text{Cov}(\x,\y)\zeta_{i}(\y)d\y = \lambda_{i}\zeta_{i}(\x).$$
\newline
In particular, we compute the first $k$ eigenvalues $\lambda_{i}$ and corresponding eigenfunctions $\zeta_{i}\in V_{h}$ for which

$$0.9\;\le\;\frac{\sum_{i=1}^{k}\lambda_{i}}{\sum_{i=1}^{+\infty}\lambda_{i}} = \frac{1}{10}\sum_{i=1}^{k}\lambda_{i},$$
\newline
where the last equality is easily deduced by the covariance kernel, as $\sum_{i=1}^{+\infty}\lambda_{i} = \int_{\Omega}\text{Cov}(\x,\x)d\x=\int_{[0,1]^{2}}10d\x = 10$. \begin{figure}
    \centering
    \includegraphics[width=0.7\textwidth]{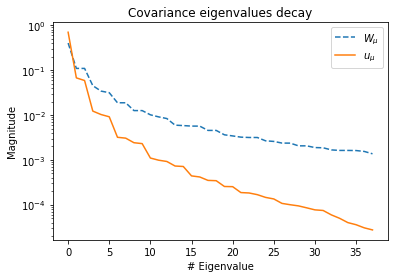}
    \caption{\review{Normalized eigenvalues decay for the covariance matrices of the parameters, i.e. the random field $W_{\mup}$, and the solutions $u_{\mup}$. The eigenvalues are normalized with respect to the total variance of the two fields, so that their cumulative sum converges to 1. The y-axis is reported in logarithmic scale. Note that the solutions are sampled by considering the truncated field $W_{\mup}^{k}$ as parameter.}}
    \label{fig:eigdecay}
\end{figure} \review{Figure \ref{fig:eigdecay} shows how the normalized eigenvalues $\lambda_{k}/10$ decay with $k$.} This procedure results in the choice of the truncation index $k=38$.
From a statistical point of view, we say that $W_{\mup}^{k}$ explains at least 90\% of the variability in $W$. We run the FOM to generate respectively $\ntrain=9000$ and $\ntest=1000$ snapshots, where the parameter values are sampled from $\Theta$ independently and accordingly to a $k$-variate standard Gaussian distribution.

\subsubsection{DL-ROM design}
We note that, in this case, the parameter space $\Theta$ is not compact, as it is unbounded. Nevertheless, since $\Theta$ has finite measure with respect to $\mathcal{P}$, it is straightforward to adapt the reasoning in Section \ref{sec:dlrom} to this context. For instance, the error defined in \eqref{eq:mre} can be made arbitrarily small provided that $\Phi$ is sufficiently accurate within some compact subdomain $\Theta_{M}:=\{\mup\in\mathbb{R}^{p}\text{ s.t. }|\mup|<M\}$. A more in depth discussion on the regularity of the parametric map in the case of stochastic coefficients can be found in \cite{babuska}.

\review{For the dimensionality reduction, we employ a transcoder-decoder. This is to ensure a maximal compression, as the number of parameters is already mildly large. The network topology is reported in Tables \ref{tab:architectures2}.a, \ref{tab:architectures2}.b. Coherently with the chosen approach, we fix the reduced dimension to be $\rdim:=k=p=38$. In general, the architecture is very similar to those considered in Section \ref{sec:exp1}, the only difference being in the specifics of the convolutional layers.
\newline\indent
As before, we adopt the He initialization for the decoder while we force the initial state of the transcoder to behave as $\transcoder(\mup,\up)=\mup$. We train $\decoder\circ\transcoder$ using stochastic gradient descent with minibatches of size 10 and for a total of 1200 epochs. In this case, we observe that snapshots come in rather different scales when compared one another. For this reason, we choose to define the loss function in terms of relative errors, $\loss(\y,\hat{\y}):=||\y-\hat{\y}||/||\y||.$ To optimize the latter we employ the Adamax optimizer \cite{kingma}, with default parameters and learning rate of $10^{-3}$. Here, the choice of Adamax over AdamW is motivated by the fact that the former is known to be more stable.
\newline\indent
Table \ref{tab:architectures2}.c reports the architecture for reduced map network, $\phi$. We train $\phi$ using the Adamax optimizer with batch size 50 and learning rate 5e-3, for a total of 5000 epochs. Here also we use relative errors as discrepancy measures. The whole offline stage of the DL-ROM took around 4 hours.}
{\small
\begin{table}
\caption{\label{tab:architectures2} Architectures for the Stochastic Poisson equation (Section \ref{sec:exp2}). Tables (a) and (b) together describe the transcoder-decoder, while (c) concerns the change of coordinates map $\phi$. All layers are considered with the 0.1-leaky ReLU activation.}
\begin{flushleft}
a) $\transcoder$\vspace{0.25em}
\newline
 \begin{tabular}{||l l l l||} 
 \hline
 Layer & Input & Output & dof\\ [0.5ex] 
 \hline\hline
 Dense & 10239 & 38 & 389120\\ 
 \hline
\end{tabular}
\end{flushleft}
\vspace{1em}
\begin{flushleft}
b) $\decoder$\vspace{0.25em}
\newline
  \begin{tabular}{||l l l l l l||} 
 \hline
 Layer & Input & Output & Kernel & Stride & dof\\ [0.5ex] 
 \hline\hline
 Dense & 38 & 18432 & - & - & 718848\\ 
 \hline
 Transp. Conv. & $512\times6\times6$ & $256\times12\times12$ & 2 & 2 & 524544\\ 
 \hline
 Transp. Conv. & $256\times12\times12$ & $128\times24\times24$ & 2 & 2 & 131200\\ 
 \hline
 Transp. Conv. & $128\times24\times24$ & $64\times48\times48$ & 2 & 2 & 32832\\ 
 \hline
 Transp. Conv. & $64\times48\times48$ & $1\times101\times101$ & 7 & 2 & 3137\\ 
 \hline
\end{tabular}
\end{flushleft}
\vspace{1em}
\begin{flushleft}
c) $\phi$ \vspace{0.25em}
\newline
\begin{tabular}{||l l l l||} 
 \hline
 Layer & Input & Output & dof\\ [0.5ex] 
 \hline\hline
 Dense & 7 & 1024 & 7168\\ 
 \hline
 Dense & 1024 & 512 & 524288\\ 
 \hline
 Dense & 512 & 256 & 131072\\ 
 \hline
 Dense & 256 & 7 & 1792\\ 
 \hline
\end{tabular}
\end{flushleft}
\end{table}
}

\subsubsection{Numerical results}
The dimensionality reduction is satisfactory, with mean relative errors of 1.10\% and 2.57\% respectively on the training and test sets. \review{Conversely, the approximation of the reduced map was more challenging, see Table \ref{tab:errors2} and Figure \ref{fig:examples2}. While the final model is able to approximate the parameter-to-state map with an error of 4.69\% over the training set, the inaccuracy increases to 12.50\% on the test set. This is a situation in which the solution manifold is relatively simple (even linear subspaces provide good approximations, cf. Figure \ref{fig:eigdecay}), but the parameter dependency is complicated. Therefore, while 9000 snapshots are sufficient for the training the transcoder-decoder, they are not enough for $\phi$ to generalize well. Another reason is that the parameter space is very large, and $\phi$ has to face the curse of dimensionality. Possible ways to overcome this drawback without having to generate more samples would be to exploit low-discrepancy sequences, as in \cite{mishra2}, or use physics-informed approaches at the reduced level, as in \cite{hesthaven3}.}

{\small
\begin{table}
 \begin{tabular}{||l l l||} 
 \hline
Data & MRE & Equivalent POD modes\\ [0.5ex] 
\hline\hline
Train & 4.69\% & 39\\ 
\hline
Test  & 12.50\% & 17\\ 
\hline
\end{tabular}
\caption{\label{tab:errors2} \review{DL-ROM performance for the Stochastic Poisson problem (Section \ref{sec:exp2}). Equivalent POD modes = minimum number of POD-modes needed by any POD-based ROM to outperform the DL-ROM.}}
\end{table}
}
\begin{figure}
\includegraphics[width=0.9\textwidth]{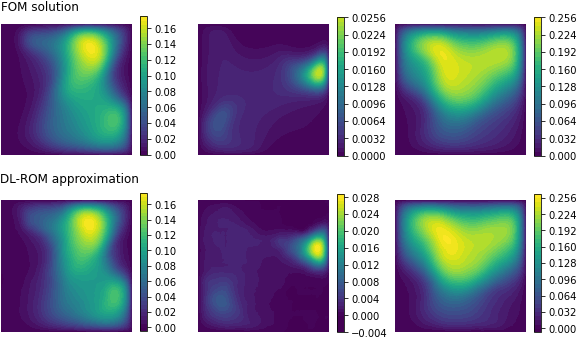}
\newline
\vspace{-1.5em}
\begin{flushleft}
\tiny{
\hspace{1.6em}Relative error: 5.90\%
\hspace{5.4em}Relative error: 23.30\%
\hspace{4.5em}Relative error: 4.19\%
}
\end{flushleft}
\caption{
    DL-ROM results for the Stochastic Poisson equation (Section \ref{sec:exp2}). The picture shows three examples coming from the test set. The first row reports the high-fidelity solutions, while the second row displays the corresponding DL-ROM approximations. Relative errors are also reported.
}
\label{fig:examples2}
\end{figure}

\newpage
\subsection{Some final remarks}

\subsubsection{On the choice of the latent dimension}
\label{sec:remark}
In all our experiments, the reduced dimension was equal to the number of parameters, that is $n=p$. For the Advection-Diffusion problem, this was motivated by the fact that $\mindim(\solmanifold)=p$. Conversely, for the stochastic Poisson equation, we fixed $n=p$ due to our design choice of using a transcoder-decoder. However, this is not always the case. In fact, as we argued in Section \ref{sec:nonlinear}, it is possible that $\mindim(\solmanifold)>p$, in which case the autoencoder latent dimension should exceed the number of parameters. As an example, consider the following boundary value problem,
\begin{equation}
\label{eq:remark}
\begin{cases}
-\Delta u 
+ 10\left(\cos\mu, \sin\mu\right)\cdot\nabla u
= 
10\text{e}^{-100|\x-\x_{0}|}
& \text{in }\Omega
\\
u = 0  & \text{on }\partial\Omega,
\end{cases}
\end{equation}
where $\Omega:=(0,1)^{2}$, $\x_{0}:=(0.5, 0.5)$ and $\mu\in\Theta:=[0,2\pi]$. In this case, $p=1$ but $\mindim(\solmanifold)=2$. In fact, it is not hard to see that the solution manifold $\solmanifold$ is homeomorphic to the unit circle. Therefore, the DL-ROM approach requires an autoencoder with latent dimension $n=2$. In Figure \ref{fig:remark} we have summarized the results obtained with this design choice for this particular problem. We do not report the network architectures, as that is not the focus of our discussion here.

We note that the low-dimensional representation of $\solmanifold$ is given by a curve in $\mathbb{R}^{2}$, coherently with the fact that $\phi:\mathbb{R}\to\mathbb{R}^{2}$. It is interesting to see that the DL-ROM representation of $\solmanifold$ actually resembles a circle. We also note that the curve $\phi(\Theta)\subset\mathbb{R}^{2}$ is not smooth. This is not caused by the PDE itself, which is very regular, by rather by the use of the ReLU activation for $\phi$.

For the sake of completeness, we mention that: for this example, we considered the same FOM as in Section \ref{sec:exp2}; we trained and tested the DL-ROM respectively over 900 and 100 snapshots; the model reported average relative errors below 2\% both on the training and test set; the offline stage took around 10 minutes on GPU.

\begin{figure}
  \begin{minipage}[t]{0.50\textwidth}
    \includegraphics[width=\textwidth]{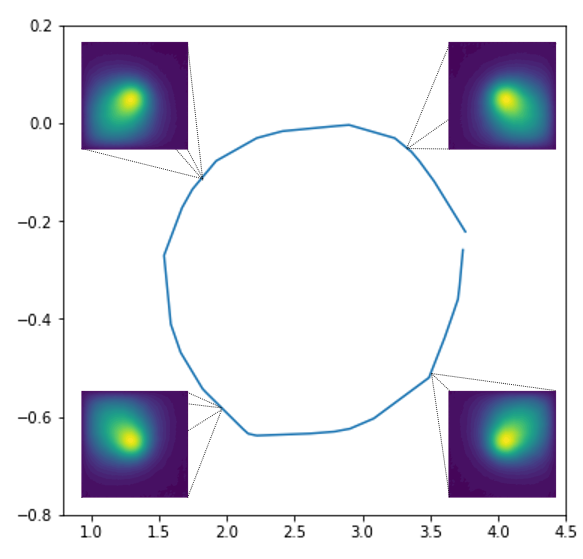}
  \end{minipage}\hfill
  \begin{minipage}[b]{0.50\textwidth}
    \caption{
Low-dimensional representation of the solution manifold for problem \eqref{eq:remark}, Section \ref{sec:remark}. Each point on the curve $\phi(\Theta)\subset\mathbb{R}^{2}$ corresponds to a given solution of the parametrized PDE. The picture also shows the reconstructions $\decoder(\phi(\mu))$ obtained for different values of $\mu\in\Theta$.
    \vspace{6.5em}
    } \label{fig:remark}
  \end{minipage}
\end{figure}

\label{sec:directapproximation}
\subsubsection{On alternative Deep Learning approaches}
\review{The Reduced Order Modelling literature is becoming more and more flourishing, with a large variety of techniques being developed. It is thus important to understand how the DL-ROM approach relates to other Deep Learning based ROMs. In particular, we would like to comment on those alternative strategies that are significantly different in spirit.

One approach, is to directly approximate the correspondence $(\mup,\x)\to u_{\mup}(\x)$ using a single DNN, namely $\Phi:\mathbb{R}^{p+d}\to\mathbb{R}$. This has the advantage of yielding a mesh-free ROM that can be trained on pointwise observations. Also, since $\Phi$ is scalar-valued, light architectures are expected to be sufficiently expressive. However, this approach has a few drawbacks. In fact, if $u_{\mup}$ is not highly regular with respect to $\x$, e.g. not Lipschitz, then one may require a very deep (or large) architecture to obtain reliable approximations, and thus many samples too. For example, we have tested this approach on the Advection-Diffusion problem using a scalar-valued architecture with 7 hidden layers of constant width (50 neurons). We have used the same snapshots available for the DL-ROM, where each high-fidelity solution now contributed with a total of $N_{h}$ observations. Despite all our efforts, we were unable to obtain a sufficiently accurate network, as we always obtained relative errors above 20\% (both for the training and test sets). We believe that the main drawback is given by the singular source, which generates high gradients in the PDE solution. We also mention that, despite the light architecture, training $\Phi$ was quite expensive. In fact, each computation of the squared error $||u_{\mup}^{h}-\Phi(\mup,\cdot)||_{L^{2}(\Omega)}^{2}$ required $N_{h}$ evaluations of the DNN. Even though these can be computed in parallel by stacking all quadrature nodes in a single matrix, the cost of the backpropagation step increases substantially, as keeping track of all the gradients becomes challenging.

An alternative strategy is given by DeepONets \cite{karniadakis}, which are now becoming very popular. DeepONets are primarily employed for learning operators in infinite dimensions, but they have a natural adaptation to the case of finite-dimensional parameters. In fact, the first step in the DeepONet pipeline is to encode the input through $p$ sensors, which allows us to formally recast the problem into one with $p$ scalar parameters. With this set up, DeepONets are still a mesh-free approach, but they consider an approximation of the form $u_{\mup}(\x)=\Psi(\x)\cdot\phi(\mup)$, where $\cdot$ is the dot product, while $\Psi:\mathbb{R}^{d}\to\mathbb{R}^{n}$ and $\phi:\mathbb{R}^{p}\to\mathbb{R}^{n}$ are two neural networks, respectively called the \textit{trunk net} and the \textit{branch net}. The main advantages of DeepONets are the following. First of all, as they are intrinsically mesh-free, it is possible to train them on sparse pointwise data. Secondly, as they decouple the dependency between $\mup$ and $\x$, it is possible to bound their complexity and to estimate their generalization capabilities in ways that are specific to this approach, see e.g. \cite{mishra3}. Finally, due to their original construction, they can be a natural choice when the input parameters are actually sensor observations of some functional input. However, DeepONets have their limitations as well. In fact, despite sharing some terminology with the DL-ROM approach, such as \textit{encoder} and \textit{decoder}, they ultimately rely on a linear strategy for representing solutions. To this end, let $\{\x_{i}\}_{i=1}^{\fomdim}$ be the nodes in the high-fidelity mesh. Then, the DeepONet approximation over these vertices is $\mathbf{V}\phi(\mup)$, where $\mathbf{V}:=[\Psi(\x_{1}),\dots,\Psi(\x_{\fomdim})]^{T}\in\mathbb{R}^{\fomdim\times n}$. As a consequence, the choice of $n$ is subject to the behavior of the linear Kolmogorov $n$-width. For instance, to match the DL-ROM accuracy in the Advection-Diffusion problem with $C=40$, a DeepONet architecture would require $n\ge300$, which may hinder its actual implementation. Also, due to the poor regularity with respect to the $\x$ variable, training $\Psi$ may be a challenging task. 

Conversely, the DL-ROM approach treats solutions as single objects, $\ufomp\in V_{h}$. While this clearly results in a loss of information, the space dependency of solutions can be partially recovered by interchanging nonlocal and local operators (respectively, dense and convolutional layers) in the ROM pipeline. Finally, thanks to the use of nonlinear reduction techniques, the DL-ROM can overcome some of the difficulties implied by the Kolmogorov $n$-width. Of course, though, our approach has some limitations too. First of all, it is mesh-constrained, as it is bounded to the existence of a high-fidelity model. Secondly, it mostly relies on convolutional layers, which makes it less obvious to adapt the current implementation to non-cubic domains. Finally, the approach was originally designed for the case $p\ll\fomdim$. Even though infinite-dimensional parameters spaces can be handled as in Section \ref{sec:exp2}, better strategies may be available.}

\section{Conclusions}

We developed a novel deep learning approach for reduced order modelling of parameter dependent PDEs, here termed DL-ROM, where the solution map is approximated by a deep neural network $\Phi$. Our construction is based on the use of autoencoders, which we employ as a nonlinear alternative to other reduction techniques such as the POD. In the DL-ROM approach, we choose the latent dimension to be the smallest one granting arbitrary accuracy. The value of such dimension was investigated in detail in Section \ref{sec:nonlinear}. There, we proved some theoretical results, respectively Theorem \ref{theorem:parametric-reduction} and Theorem \ref{theorem:advediff-reduction}, that can be used as guidelines for practical applications. \review{Further insights on the potential of the DL-ROM approach were discussed in Theorem 5, Section \ref{sec:dlrom}. There, we provided explicit error estimates that were later confirmed via empirical evidence (cf. Section \ref{sec:exp1})}.

The results obtained in our experiments are promising. The DL-ROM appears to be a captivating alternative to traditional ROMs, especially in challenging situations where linear models fail. \review{Our first test case, Section \ref{sec:exp1}, shows that the method is able to handle transport-dominated problems and that it behaves well in the presence of singularities. Good results are also obtained for high-dimensional parameter spaces, Section \ref{sec:exp2}, even though it becomes harder to handle the generalization error. The latter can be either improved by increasing the number of training samples or by including physical terms in the loss function. While we wish to investigate this further in future works, we acknowledge that multiple researchers are now working on this topic, e.g. \cite{mishra, shin}.

In principle, being completely nonintrusive and data-driven, the proposed approach can be readily applied to nonlinear PDEs and more complicated systems. Also, at the cost of treating time as an additional parameter, as in \cite{fresca}, one may extend the DL-ROM approach to time dependent problems. However, some changes have to be made in order to extrapolate over time, for instance by enforcing those properties that are typical of dynamical systems (e.g. the existence of underlying semi-groups). We leave all these considerations for future works.}

We conclude with a few comments on the computational cost. While the offline stage is clearly expensive, our design choices allow for a significant reduction in the model complexity, which results in architectures that are easier to train (cf. e.g. \cite{fresca, geist}).
Nevertheless, the DL-ROM is extremely fast when used online. This makes the method suited for demanding tasks with multiple queries, as the ones typical of sensitivity analysis, uncertainty quantification and multiscale methods.

\appendix

\theoremstyle{definition}
\newtheorem{examp}[subsection]{Example}
\renewcommand{\thelemma}{A.\arabic{lemma}}
\setcounter{theorem}{1}

\final{\section{An example of slow decay in the Kolmogorov $n$-width}
\label{appendix:example}
\begin{examp}
\label{example:kolmogorov-decay}
Let $\Omega:=(-2,2)$. For any $x_{0}\in\Omega$, let $\delta_{x_{0}}$ be the Dirac delta distribution centered at $x_{0}$. Consider the 2-dimensional parameter space $\Theta:=\{\mup=(\mu_{1},\mu_{2})\in[-1,1]\times[0,1]\;|\;-1\le \mu_{1}-\mu_{2}\le \mu_{1}+\mu_{2}\le 1\}$, together with the differential problem below
$$
\begin{cases}
    -u'' = 2\delta_{\mu_{1}}-\delta_{\mu_{1}-\mu_{2}}-\delta_{\mu_{1}+\mu_{2}} & x\in\Omega
    \\u(-2) = u(2) = 0
\end{cases}
$$
For each $\mup\in\Theta$, the corresponding solution $\up$ is a piecewise linear function with support given by $[\mu_{1}-\mu_{2},\mu_{1}+\mu_{2}]$. In particular, $\up$ is a hat function with a peak of height $\mu_{2}$ at $x=\mu_{1}$. Also, by direct calculation, $$||\up||_{L^{2}(\Omega)} = \sqrt{\frac{2}{3}\mu_{2}^{3}}.$$ Let now $\solmanifold:=\{\up\}_{\mup\in\Theta}\subset V:=L^{2}(\Omega)$ and fix any positive $\rdim\in\mathbb{N}$. It is then easy to see that the functions 
$$v_{i,n}:=u_{\left(-1+\frac{i}{n}-\frac{1}{2n}, \frac{1}{2n}\right)},\quad i=1,...,2n$$
are mutually orthogonal in  $L^{2}(\Omega)$. As a consequence, the Kolmogorov $n$-width of $\solmanifold$ satisfies

\[
\begin{multlined}
d_{n}(\solmanifold) \ge d_{n}(\{v_{i,n}\}_{i=1}^{2n}) = \\\\
= d_{n}\left(\left\{||v_{i,n}||_{L^{2}(\Omega)}^{-1}v_{i,n}\right\}_{i=1}^{2n}\right)||v_{i,n}||_{L^{2}(\Omega)} = \frac{1}{\sqrt{2}}||v_{i,n}||_{L^{2}(\Omega)} = \frac{1}{2\sqrt{6}}n^{-3/2}, 
\end{multlined}
\]
\newline
where the second last equality follows by noticing that the set $\{||v_{i,n}||_{L^{2}(\Omega)}^{-1}v_{i,n}\}_{i=1}^{2n}$ is isometric to the canonical basis of $\mathbb{R}^{2n}$ (see \cite{ohlberger}).

Therefore, $d_{n}(\solmanifold)$ decays with a rate of at most $n^{-3/2}$, which is relatively slow when compared to the ideal case where the parametric map is analytic and the Kolmogorov $n$-width is known to decay exponentially, $d_{n}(\solmanifold)\sim\text{e}^{-\gamma n}$.
\end{examp}}

\section{Proof of the Claims in Section \ref{sec:experiments}}
\label{appendix:exp}
\newcommand{\xmu}{\x_{\mup}}
\noindent\textbf{Proof of Claim 1.} Let $\mup\in\Theta$. For the sake of brevity, define $\xmu:=(\mu_{6},\mu_{7})\in\Omega$. We shall recall that, by Morrey's embedding theorem \cite{evans}, we have $W^{1,4}_{0}(\Omega)\hookrightarrow\mathcal{C}^{0,1/2}(\Omega)$, the latter being the space of $1/2$-Hölder maps. As a consequence, for any $w\in W^{1,4}_{0}(\Omega)$, we have
\[
\begin{multlined}
\left|w(\xmu)-\int_{\Omega}f_{\mup}(\z)^{\epsilon}w(\z)d\z\right| = \left|\int_{\Omega}(w(\xmu)-w(\z))f_{\mup}^{\epsilon}(\z)d\z\right|\le
\\
\le \int_{\Omega}|w(\xmu)-w(\z)|\forcep^{\epsilon}(\z)d\z\le C'||w||_{W^{1,4}_{0}(\Omega)}\int_{\mathbb{R}^{2}}|\xmu-\z|^{1/2}\forcep^{\epsilon}(\z)d\z
\end{multlined}
\]
for a constant $C'>0$ independent on both $w$ and $\mup$. The change of variables $\y:=(\z-\xmu)/\epsilon$ then yields
\[
\begin{multlined}
\left|w(\xmu)-\int_{\Omega}f_{\mup}(\z)^{\epsilon}w(\z)d\z\right| \le..\le C'||w||_{W^{1,4}_{0}(\Omega)}\epsilon^{1/2}\int_{\mathbb{R}^{2}}|\y|^{1/2}G(\y)d\y
\end{multlined}
\]
where $G$ is the probability density of the standard normal distribution in $\mathbb{R}^{2}$. By passing at the supremum over $w$ with $||w||_{W^{1,4}_{0}(\Omega)}=1$ and $\mup\in\Theta$ we get
$$\sup_{\mup\in\Theta}||\forcep-\forcep^{\epsilon}||_{W^{-1,4}(\Omega)}\le C''\epsilon^{1/2}$$
for some constant $C''>0$. By classical stability estimates for elliptic PDEs, see e.g. Lemma \ref{lemma:abstract-continuity}, we then have $\sup_{\mup\in\Theta}||\up-\up^{\epsilon}||_{W^{1,4/3}(\Omega)}\le 10C''\epsilon^{1/2}$, as $\condp(\x)\ge10^{-1}$ for all $\mup\in\Theta$. Up to the embedding the solution manifold in $L^{2}(\Omega)$, the claim now follows.\qed
\newline\newline
\textbf{Proof of Claim 2}. The idea is to re-parametrize the solution manifold, as the given parametrization suffers from the lack of injectivity. In fact, both $\mu_{5}=0$ and $\mu_{5}=2\pi$ return the same advective field (and we cannot exclude one extreme, or $\Theta$ would lose its compactness).
\newcommand{\muone}{\mup'}
\newcommand{\mutwo}{\mup''}
To do so, let $S^{1}$ be the unit circle in $\mathbb{R}^{2}$. We define the hypercylinder $\Theta':=[0,1]^{4}\times S^{1}\times[0.1, 0.9]^{2}$. We will adopt a seven component notation as before, even though $\Theta'\subset\mathbb{R}^{8}$, as $\mup_{5}\in S^{1}$ is now 2-dimensional. We re-parametrize the coefficients of the PDE in terms of this new coordinates in the obvious way, especially for $\cond_{\muone}$ and $f_{\muone}$. For the advective field we let $\adve_{\muone}:=\mup_{5}$. We shall now prove that: (i) the new parameter space satisfies $\mindim(\Theta')=7$, (ii) the new parametric map $\muone\to u_{\muone}$ is continuous and (iii) injective. Claim 2 then follows by Theorem \ref{theorem:parametric-reduction}.
\newline\newline
\textit{Proof that $\mindim(\Theta')=7$}. Consider the map $\phi:\Theta'\to\mathbb{R}^{7}$ given by
$$\phi(\muone)=(\muone_{5}(1+\mu'_{1}),\; \mu'_{2},\;\mu'_{3},\;\mu'_{4},\;\mu'_{6},\;\mu'_{7}).$$
Then the image $\phi(\Theta')=\{\z\in\mathbb{R}^{2}:1\le|\z|\le2\}\times[0,1]^{3}\times[0.1,0.9]^{2}\subset\mathbb{R}^{7}$ has nonempty interior. In particular, $\mindim(\phi(\Theta'))=7$. Since $\phi$ clearly admits a continuous inverse, $\phi^{-1}: \phi(\Theta')\to\Theta'$, we conclude that $\mindim(\Theta')=7$.
\newline\newline\textit{Proof that the parametric map $\mup'\to u_{\mup'}$ is continuous}. Clearly $\cond_{\muone}$ and $\adve_{\muone}$ depend continuously on $\muone$. Using again the embedding $W^{1,4}_{0}(\Omega)\hookrightarrow\mathcal{C}^{0,1/2}(\Omega)$ as in the proof of Claim 1, it is also easy to see that the map $\muone\to f_{\muone}$ is $\Theta'\to W^{-1,4}(\Omega)$ Hölder continuous. By composition (see Lemma \ref{lemma:pde-continuity}), we then obtain the continuity of the parametric map. 
\newline\newline\textit{Proof that the parametric map $\mup'\to u_{\mup'}$ is injective}. Let $\muone,\mutwo\in\Theta'$ and assume that $u\in W^{1,4/3}(\Omega)$ is a solution for both parameters, that is $u=u_{\muone}=u_{\mutwo}$. Classical results on inner regularity of solutions to elliptic PDEs ensure that $u_{\muone}$ is locally $H^{1}$ at all points except at the location of the Dirac delta $f_{\mup'}$. The analogue holds for $u_{\mutwo}$, so clearly it must be $\mu'_{6}=\mu''_{6}$ and $\mu'_{7}=\mu''_{7}$ in order for the solutions to coincide. Next, let $w\in\mathcal{C}^{\infty}_{0}(\Omega_{0})$ and extend it to zero on $\Omega\setminus\Omega_{0}$. Using $w$ as test function for the equations of both $\muone$ and $\mutwo$ and then subtracting term by term yields
$$C\int_{\Omega_{0}}\left(\adve_{\muone}-\adve_{\mutwo}\right)\cdot\nabla u w = 0.$$
As $w$ is arbitrary, it follows that $\nabla u$ is orthogonal to $(\adve_{\muone}-\adve_{\mutwo})$ on $\Omega_{0}$. In particular, if $\adve_{\muone}\neq\adve_{\mutwo}$, then $u$ must be constant along the direction $(\adve_{\muone}-\adve_{\mutwo})$ within $\Omega_{0}$. But, because of the boundary conditions, this would make $u$ identically constant near at least one edge of $\partial\Omega$. However, this is a contradiction. In fact, $u_{|\partial\Omega}\equiv1$, thus classical maximum principles ensure that $u>1$ a.e. in $\Omega$ (see e.g. Lemma \ref{lemma:positivity}). It follows that $\adve_{\muone}=\adve_{\mutwo}$ and so $\muone_{5} = \mutwo_{5}$. We now notice that, by subtracting the equations for $\muone$ and $\mutwo$, we have
\[\begin{multlined}
\sum_{i=1}^{4}(\mu'_{i}-\mu''_{i})\int_{\Omega_{i}}\nabla u\cdot\nabla w
= 0\quad\forall w\in \mathcal{C}^{\infty}_{0}(\Omega).
\end{multlined}\]
Fix any $i\in\{1,2,3,4\}$ and let $v\in\mathcal{C}^{\infty}(\Omega_{i})$. Define $w\in\mathcal{C}^{\infty}(\Omega_{i})$ to be any of the strong solutions to the PDE $-\Delta w = v$ with homogeneous Neumann boundary condition on $\partial\Omega_{i}$. Since the subdomains are clearly separated, it is possible to extend $w$ on the whole domain $\Omega$ so that $w$ is still smooth but also vanishes on $\partial\Omega$ and on $\Omega_{j}$ for all $j\neq i$. Using such $w$ in the last identity above and integrating by parts yields
$$0 = (\mu'_{i}-\mu''_{i})\int_{\Omega_{i}}\nabla u\cdot\nabla w = (\mu'_{i}-\mu''_{i})\int_{\Omega_{i}}u(-\Delta w) = (\mu'_{i}-\mu''_{i})\int_{\Omega_{i}}uv. $$
Now assume that $\mu'_{i}\neq\mu''_{i}$. Then $\int_{\Omega_{i}}uv=0$ for all $v\in\mathcal{C}^{\infty}(\Omega_{i})\implies u_{|\Omega_{i}}\equiv0$, contradiction. Then $\mu'_{i}=\mu''_{i}$ and thus $\muone=\mutwo$, as claimed.\qed

\section{Auxiliary results on Partial Differential Equations}
\label{appendix:pdes}

\newcommand{\normv}[1]{||#1||_{V}}
\newcommand{\normw}[1]{||#1||_{W}}
\newcommand{\normdual}[1]{||#1||_{*}}
\newcommand{\bilinear}{\mathfrak{B}}
\newcommand{\bilinearc}{\mathfrak{B}_{c}}
\newcommand{\normb}[1]{|||#1|||}

\begin{lemma}
\label{lemma:abstract-continuity}
Let $(V, \normv{\cdot})$ and $(W, \normw{\cdot})$ be two Banach spaces, with $W$ reflexive. Let $(W^{*}, \normdual{\cdot})$ be the dual space of W and define $(\bilinear(V, W), \normb{\cdot})$ as the normed space of bounded bilinear forms $V\times W\to\mathbb{R}$, where 
$$|||a|||:=\sup_{\substack{\\\normv{v}=1\\\normw{w}=1}}\;|a(v,w)|.$$
Let $\bilinearc(V, W)\subset\bilinear(V,W)$ be the subset of coercive bounded bilinear forms, i.e. $a\in\mathfrak{B}(V,W)$ for which
$$\lambda(a) := \inf_{\normv{v}=1}\sup_{\normw{w}=1}\;|a(v,w)| > 0,\quad\quad\textnormal{and}\quad \inf_{\normw{w}=1}\sup_{\normv{v}=1}\;|a(v,w)|>0.$$
Then,
\begin{itemize}
    \item[i)] $\lambda$ is $\bilinear(V,W)\to\mathbb{R}$ continuous
    \newline 
    \item[ii)] For each $a\in\bilinearc(V,W)$ and $F\in W^{*}$ there exists a unique $u=u_{a,F}\in V$ such that $a(v,w)=F(w)$ for all $w\in W$. Furthermore, $u$ depends continuously on both $a$ and $F$. In particular:
    
    \begin{equation}
    \label{eq:abstract-continuity}
        \normv{u_{a,F}-u_{a',F'}} \le \frac{1}{\lambda(a)}\left(\normdual{F-F'}+\frac{1}{\lambda(a')}\normb{a-a'}\cdot\normdual{F'}\right)
    \end{equation}
    \newline
    for all $a,a'\in\bilinearc(V,W)$ and $F,F'\in W^{*}$.
    
\end{itemize}

\end{lemma}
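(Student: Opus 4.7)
The plan is to recognise Part (ii) as an instance of the Banach--Ne\v{c}as--Babu\v{s}ka (BNB) theorem and to reduce everything to the corresponding statement about the operator $A_a : V \to W^*$ defined by $(A_a v)(w) := a(v,w)$. This identification $a \leftrightarrow A_a$ is an isometry from $\bilinear(V,W)$ onto the space of bounded linear operators $V \to W^*$, so $\|A_a\|_{V\to W^*} = \normb{a}$, and the first inf-sup condition rewrites cleanly as the lower bound $\normdual{A_a v} \ge \lambda(a)\normv{v}$ for all $v \in V$.

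For Part (i), I would derive the Lipschitz estimate $|\lambda(a) - \lambda(a')| \le \normb{a-a'}$ directly. Fixing $\normv{v}=1$ and applying the triangle inequality gives $\sup_{\normw{w}=1}|a(v,w)| \le \sup_{\normw{w}=1}|a'(v,w)| + \normb{a-a'}$; taking the infimum over $v$ and then swapping the roles of $a$ and $a'$ finishes the argument, and continuity of $\lambda$ on $\bilinear(V,W)$ (hence on $\bilinearc(V,W)$) follows at once.

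For Part (ii), the core task is to prove that $A_a$ is a bijection $V \to W^*$ with $\|A_a^{-1}\| \le 1/\lambda(a)$. Injectivity and closedness of the range both drop out of the lower bound $\normdual{A_a v} \ge \lambda(a)\normv{v}$ combined with the completeness of $V$. Surjectivity requires an argument by contradiction: if $A_a(V)$ were a proper closed subspace of $W^*$, Hahn--Banach would produce a nonzero $\Phi \in W^{**}$ annihilating it; the reflexivity of $W$ then identifies $\Phi$ with a nonzero $w_0 \in W$ satisfying $a(v, w_0) = 0$ for every $v \in V$, contradicting the second inf-sup condition applied to $w_0/\normw{w_0}$. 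Granted bijectivity, $u_{a,F} := A_a^{-1}F$ is the unique solution, and the estimate \eqref{eq:abstract-continuity} is obtained by decomposing $A_a(u_{a,F} - u_{a',F'}) = (F - F') - (A_a - A_{a'}) u_{a',F'}$, applying $A_a^{-1}$, using $\|A_a - A_{a'}\|_{V \to W^*} = \normb{a-a'}$, and bounding $\normv{u_{a',F'}} \le \normdual{F'}/\lambda(a')$ directly from $A_{a'} u_{a',F'} = F'$.

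The main obstacle is the surjectivity step in the BNB argument: it is the sole place where both the reflexivity of $W$ and the second inf-sup condition play an essential role, since without reflexivity Hahn--Banach would only yield an element of $W^{**}$ against which the inf-sup bound has no leverage. Once surjectivity is secured, the rest of the lemma is essentially operator-norm bookkeeping.
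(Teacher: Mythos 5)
Your proposal is correct and follows the same overall structure as the paper's proof. The Lipschitz bound $|\lambda(a)-\lambda(a')| \le \normb{a-a'}$ for part (i) is derived in essentially the identical way, and the derivation of \eqref{eq:abstract-continuity} — subtract the two variational identities to isolate $a(u-u',w) = (F-F')(w) + (a'-a)(u',w)$, take the supremum over unit $w$, then invoke the a priori bound on $\normv{u'}$ — is the same computation, merely transcribed from bilinear-form language into operator language via $A_a$. The one real difference is that the paper treats the existence/uniqueness part of (ii) as a black box, citing a Banach-space Lions--Lax--Milgram result (Lemma~3.1 in the reference \cite{hoffmann}), whereas you reconstruct it from scratch through the standard Banach--Ne\v{c}as--Babu\v{s}ka argument: injectivity and closed range from $\normdual{A_a v}\ge\lambda(a)\normv{v}$, and surjectivity by Hahn--Banach together with the reflexivity of $W$ and the second inf-sup condition. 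That makes your write-up self-contained where the paper delegates; you also correctly pinpoint that reflexivity enters precisely at the surjectivity step, which matches the role it plays in the cited result. Mathematically the two routes buy the same thing, so the choice is one of exposition.
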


\noindent
\proof{
i) Let $a,a'\in\bilinear(V,W)$. For every $v\in V$ and $w\in W$ with $\normv{v}=\normw{w}=1$ we have
$$a(v,w) =  a'(v,w) + (a-a')(v,w) \le |a'(v,w)| + \normb{a-a'}.$$
Since the above holds for both $w$ and $-w$, we actually have $|a(v,w)| \le |a'(v,w)| + \normb{a-a'}$, and thus $\lambda(a) \le \lambda(a') + \normb{a-a'}$. As the situation is symmetric in $a$ and $a'$, it follows that $|\lambda(a)-\lambda(a')|\le\normb{a-a'}$. In particular, $\lambda$ is Lipschitz-continuous.
\newline
\newline
ii) Given $a\in\mathfrak{B}_{c}(V,W)$ and $F\in W^{*}$, the existence and uniqueness of $u_{a,F}$ follow from a Banach space version of the Lions-Lax-Milgram theorem (see Lemma 3.1. in \cite{hoffmann}). Furthermore, one also has the stability estimate $\normv{u_{a,F}} \le (\lambda(a))^{-1}\normdual{F}$.

To get the inequality in \eqref{eq:abstract-continuity}, let $a,a'\in\bilinear(V,W)$, $F,F'\in W^{*}$ and $u:=u_{a,F}$, $u':=u_{a',F'}$. Then $a(u,w)=F(w)$ and $a'(u',w)=F'(w)$ for all $w\in V$. We subtract these two identities to get
\[
\begin{multlined}
a(u,w)-a'(u',w) = F(w)-F'(w)
\\
\implies a(u-u',w) = (F-F')(w) + (a'-a)(u',w).
\end{multlined}
\]
It follows that, for all $w\in W$, one has $a(u-u',w) \le \normdual{F-F'}\normw{w} + \normb{a'-a}\cdot\normv{u'}\normw{w}$. By linearity, using both $w$ and $-w$, we conclude that
$$|a(u-u',w)| \le \normdual{F-F'}\normw{w} + \normb{a-a'}\cdot\normv{u'}\normw{w}\quad\quad\forall w\in W.$$
In particular, passing at the supremum over $\normw{w}=1$ yields
$$\lambda(a)\normv{u-u'} \le \normdual{F-F'} + \normb{a-a'}\cdot \normv{u'}.$$
Now, we may apply the stability estimate for $\normv{u'}$ and divide by $\lambda(a)$ to get \eqref{eq:abstract-continuity}. Finally the latter, together with (i), shows that $u' \to u$ as soon as $a'\to a$ and $F'\to F$.\qed
}
\newline\newline
For the next Lemma, we consider the notation introduced in Section \ref{sec:nonlinear-param-pdes}.1.

\begin{lemma}
\label{lemma:pde-continuity}
Let $\Omega\subset\mathbb{R}^{d}$ be a bounded domain. Let $1<q<+\infty$ and define the conjugate exponent $q':=(q-1)^{-1}q$. For each $\sigma\in\Sigma(\Omega)$, $b\in B(\Omega)$, $f\in W^{-1,q'}(\Omega)$ and $g\in W^{1/q',q}(\partial\Omega)$ let $u=u_{\sigma,b,f,g}$ be the unique solution to the following variational problem
\[
\begin{multlined}
u\in W^{1,q}(\Omega):\\ u_{|\partial\Omega}=g\quad\textnormal{and}\quad\int_{\Omega}\cond\nabla u\cdot\nabla w + \int_{\Omega}\left(\adve\cdot\nabla u\right) w = \int_{\Omega}fw\quad\forall w\in W^{1,q'}_{0}(\Omega).
\end{multlined}
\]
Then, the solution map $(\cond, \adve, f, g)\to u_{\cond,\adve,f,g}$ is: (i) continuous, (ii) Lipschitz continuous on all compact subsets.
\end{lemma}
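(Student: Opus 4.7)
\proof{\textbf{(Proof plan.)} The plan is to recast the problem into the abstract framework of Lemma \ref{lemma:abstract-continuity} and then apply the stability estimate \eqref{eq:abstract-continuity}. As a first step, I would reduce to homogeneous Dirichlet boundary conditions by fixing a continuous right inverse of the trace operator, $T\colon W^{1/q',q}(\partial\Omega)\to W^{1,q}(\Omega)$, and writing $u=u_{0}+Tg$ with $u_{0}\in V:=W^{1,q}_{0}(\Omega)$. Setting $W:=W^{1,q'}_{0}(\Omega)$ and
$$a_{\sigma,\adve}(v,w):=\int_{\Omega}\cond\nabla v\cdot\nabla w+\int_{\Omega}(\adve\cdot\nabla v)w,\qquad F_{\sigma,\adve,f,g}(w):=\langle f,w\rangle-a_{\sigma,\adve}(Tg,w),$$
the component $u_{0}$ is characterized as the unique element of $V$ with $a_{\sigma,\adve}(u_{0},w)=F_{\sigma,\adve,f,g}(w)$ for all $w\in W$.

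Next I would verify the hypotheses of Lemma \ref{lemma:abstract-continuity}. Boundedness of $a_{\sigma,\adve}\in\bilinear(V,W)$ is immediate from H\"older's inequality together with the continuous embedding $W\hookrightarrow L^{q'}(\Omega)$, and it yields a Lipschitz bound of the form $\normb{a_{\sigma,\adve}-a_{\sigma',\adve'}}\le C(\|\cond-\cond'\|_{\infty}+\|\adve-\adve'\|_{\infty})$. Similarly, $(\sigma,\adve,f,g)\mapsto F_{\sigma,\adve,f,g}$ is a continuous map into $W^{*}$, being the sum of the bounded linear map $f\mapsto\langle f,\cdot\rangle$, the continuous bilinear map $(a_{\sigma,\adve},Tg)\mapsto a_{\sigma,\adve}(Tg,\cdot)$, and the continuity of $T$. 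Since the PDE is assumed to be uniquely solvable for every admissible right-hand side, the Banach--Ne\v{c}as--Babu\v{s}ka principle (equivalently, the converse direction of Lemma \ref{lemma:abstract-continuity}.ii) guarantees that $a_{\sigma,\adve}\in\bilinearc(V,W)$, so that $\lambda(a_{\sigma,\adve})>0$ for every $(\sigma,\adve)\in\Sigma(\Omega)\times B(\Omega)$; by Lemma \ref{lemma:abstract-continuity}.i the map $(\sigma,\adve)\mapsto\lambda(a_{\sigma,\adve})$ is itself continuous.

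With these ingredients in hand, I would conclude as follows. Continuity of the solution map is obtained by inserting the above estimates into \eqref{eq:abstract-continuity}, using that $u=u_{0}+Tg$ depends continuously on $g$ through $T$. For the Lipschitz assertion on compact subsets $K\subset\Sigma\times B\times W^{-1,q'}\times W^{1/q',q}$, the continuous function $\lambda(a_{\sigma,\adve})$ attains a strictly positive minimum on $K$ by compactness, while $\normdual{F_{\sigma,\adve,f,g}}$ and $\normb{a_{\sigma,\adve}}$ are uniformly bounded; substituting these uniform bounds into \eqref{eq:abstract-continuity} turns the estimate into a Lipschitz inequality in the data norms.

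The only delicate point I foresee is the handling of the advective term: one must ensure that the coercivity constants $\lambda(a_{\sigma,\adve})$ do not degenerate as $(\sigma,\adve)$ vary, which is ultimately what compactness of $K$ buys us via the continuity of $\lambda$. Everything else reduces to routine H\"older estimates and the observation that both $(\sigma,\adve)\mapsto a_{\sigma,\adve}$ and $g\mapsto Tg$ are linear (hence globally Lipschitz) in their arguments.\qed}
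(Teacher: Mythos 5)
Your proposal is broadly correct and shares the same overall framework as the paper's proof — lifting to homogeneous Dirichlet data via the trace extension $T$, defining $a_{\sigma,\adve}$ and $F_{\sigma,\adve,f,g}$, verifying their boundedness and local Lipschitz dependence on the data, and concluding via the stability estimate \eqref{eq:abstract-continuity} combined with compactness to bound the inf-sup constant $\lambda$ uniformly away from zero. Where you genuinely diverge is in how you establish $a_{\sigma,\adve}\in\bilinearc(V,W)$. You invoke the Banach--Ne\v{c}as--Babu\v{s}ka theorem, reading the lemma as \emph{assuming} unique solvability and recovering the inf-sup conditions from it; this is logically sound, since taking $g=0$ and letting $f$ range over $W^{-1,q'}(\Omega)=W^{*}$ already gives the surjectivity of the induced operator needed by BNB. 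The paper instead proves the inf-sup conditions \emph{from scratch}: for $\varphi\in\mathcal{C}^{\infty}_{0}(\Omega)$ it exploits the uniform ellipticity of $\sigma$ and the divergence-free structure of $\adve$ (integrating by parts so that the convective term $\int_{\Omega}\adve\cdot\nabla(\tfrac12\varphi^{2})$ vanishes) to bound $a_{\sigma,\adve}(\varphi,\varphi)$ from below, then passes to $V\times W$ by density. The paper's route is therefore self-contained — it \emph{derives} well-posedness from the structural hypotheses on $\Sigma(\Omega)$ and $B(\Omega)$ rather than taking it as given, and it delivers $\lambda(a_{\sigma,\adve})$ with an explicit lower bound tied to the ellipticity constant. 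Your route is shorter, but you should be explicit that the converse direction of Lemma~\ref{lemma:abstract-continuity}.ii is not actually stated there: you are importing BNB as an external theorem, and the reading of the lemma's hypothesis on which your argument depends (well-posedness for \emph{all} admissible data) should be flagged rather than left implicit.
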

 
\noindent\proof{Before moving the actual proof, we shall recall that there exists a bounded linear operator $T: W^{1/q',q}(\partial\Omega)\to W^{1,q}(\Omega)$ for which $Tg_{|\partial\Omega} = g$, namely a right-inverse of the trace operator (see \cite{necas}). In particular, there exists a constant $\tilde{C}>0$ such that $||Tg||_{W^{1,q}(\Omega)}\le\tilde{C}||g||_{W^{1/q',q}(\partial\Omega)}$.
\newline\newline
For the sake of brevity, we let $V:= W_{0}^{1,q}(\Omega)$, $W:=W_{0}^{1,q'}$ and define $W^{*}$ as the dual space of $W_{0}^{1,q'}$ endowed with the operator norm.
As in Lemma \ref{lemma:abstract-continuity}, we also let $\bilinear(V,W)$ be the collection of all bounded bilinear maps $V\times W\to\mathbb{R}$ equipped with the corresponding operator norm. Similarly, we define $\bilinearc(V, W)$ to be the subset of coercive bounded bilinear maps. We introduce the following operators:
\begin{align*}
\mathcal{A}:\quad&L^{\infty}(\Omega,\mathbb{R}^{d\times d})\times L^{\infty}(\Omega,\mathbb{R}^{d})\;\longrightarrow \;\bilinear(V,W)\\
&(\cond,\adve)\;\longrightarrow\; a_{\cond,\adve}:=\mathcal{A}(\cond,\adve),
\end{align*}
\begin{align*}
\mathcal{F}:\quad&L^{\infty}(\Omega,\mathbb{R}^{d\times d})\times L^{\infty}(\Omega,\mathbb{R}^{d})\times W^{-1,q'}(\Omega)\times W^{1/q',q}(\partial\Omega)\;\longrightarrow \;W^{*}\\
&(\cond,\adve, f, g)\;\longrightarrow\; F_{\cond,\adve,f,g}:=\mathcal{F}(\cond,\adve,f,g),
\end{align*}
where,
$$a_{\cond,\adve}(v,w):=\int_{\Omega}\cond\nabla v\cdot\nabla w + \int_{\Omega}\left(\adve\cdot\nabla v\right) w,$$
$$
F_{\cond,\adve,f,g}(w):=\int_{\Omega}\cond\nabla Tg\cdot\nabla w + \int_{\Omega}(\adve\cdot Tg)w + \int_{\Omega}fw.
$$
We claim that:
\begin{itemize}
    \item [1)] The operator $\mathcal{A}$ is linear and continuous. Also, $a_{\cond,\adve}\in\bilinearc(V,W)$ for all choices of $\cond\in\Sigma(\Omega)$ and $\adve\in B(\Omega)$.\\
    \item [2)] The operator $\mathcal{F}$ continuous. Also, it is Lipschitz continuous when restricted to any compact subset of its domain.
\end{itemize}

\noindent We shall now prove these claims. First of all, let $C>0$ be the Poincàre constant for the domain $\Omega$ and the exponent $q'$. Then, it is straightforward to see that
\newcommand{\normq}[1]{||#1||_{W^{1,q}_{0}(\Omega)}}
\newcommand{\normqp}[1]{||#1||_{W^{1,q'}_{0}(\Omega)}}
\newcommand{\normdd}[1]{||#1||_{L^{\infty}(\Omega,\mathbb{R}^{d\times d})}}
\newcommand{\normd}[1]{||#1||_{L^{\infty}(\Omega,\mathbb{R}^{d})}}
\[
\begin{multlined}
|a_{\cond,\adve}(v,w)| \le \normdd{\cond}\normq{v}\normqp{w}\\ + C\normd{\adve}\normq{v}\normqp{w},
\end{multlined}
\]
for all $v\in V$ and $w\in W$. In particular, $\mathcal{A}$ is both linear and bounded, thus continuous. Let now $\cond\in\Sigma(\Omega)$, $\adve\in B(\Omega)$ and define $\varepsilon=\varepsilon(\cond)>0$ to be the ellipticity constant of $\cond$. We notice that if $\varphi\in\mathcal{C}^{\infty}_{0}(\Omega)$, then $\varphi$ is both an element of $V$ and $W$. Also, integrating by parts yields
\[
\begin{multlined}
a_{\cond,\adve}(\varphi,\varphi)=\int_{\Omega}\cond\nabla \varphi\cdot\nabla \varphi+\int_{\Omega}\adve\cdot\left(\varphi\nabla\varphi\right)
\ge\\\ge\varepsilon\normq{\varphi}\normqp{\varphi} + \int_{\Omega}\adve \cdot\nabla\left(\frac{1}{2}\varphi^{2}\right) =\\=\varepsilon\normq{\varphi}\normqp{\varphi} - \frac{1}{2}\int_{\Omega}\text{div}(\adve)\varphi^{2}=\\=\varepsilon\normq{\varphi}\normqp{\varphi},
\end{multlined}
\]
as $\adve$ is divergence free. It follows that for each $\varphi\in\mathcal{C}^{\infty}_{0}(\Omega)$ with $\varphi\neq0$
$$\sup_{\\\substack{\psi\in\mathcal{C}^{\infty}_{0}(\Omega)\\\normqp{\psi}=1}} |a_{\cond,\adve}(\varphi,\psi)| \ge a_{\cond,\adve}\left(\varphi,\normqp{\varphi}^{-1}\varphi\right)\ge\varepsilon\normq{\varphi}$$
and, similarly,
$$\sup_{\\\substack{\psi\in\mathcal{C}^{\infty}_{0}(\Omega)\\\normq{\psi}=1}} |a_{\cond,\adve}(\psi,\varphi)| \ge a_{\cond,\adve}\left(\normq{\varphi}^{-1}\varphi,\varphi\right)\ge\varepsilon\normqp{\varphi}.$$
Since $a_{\cond,\adve}$ is continuous and $\mathcal{C}^{\infty}_{0}(\Omega)$ is both dense in $V$ and $W$, by the above we conclude that $a_{\cond,\adve}\in\bilinearc(V,W)$. This proves claim (1).
\newline
\newline
We now move to (2). For each $\cond,\adve,f,g$ and $w\in W$ we have\newcommand{\normtg}[1]{||#1||_{W^{1,q}(\Omega)}}
\[
\begin{multlined}
|F_{\cond,\adve,f,g}(w)|\le \normdd{\cond}\normtg{Tg}\normqp{w}\\+C\normd{\adve}\normtg{Tg}\normqp{w}\\+||f||_{W^{-1,q'}(\Omega)}\normqp{w}.
\end{multlined}
\]
In particular, for all $w\in W$ with unitary norm,
\newcommand{\normboundary}[1]{||#1||_{W^{1/q',q}(\partial\Omega)}}
\[
\begin{multlined}
|F_{\cond,\adve,f,g}(w)|\le \tilde{C}\normboundary{g}\left( \normdd{\cond}+C\normd{\adve}\right)+||f||_{W^{-1,q'}(\Omega).}
\end{multlined}
\]
From here, arguing by linearity easily yields (2).
\newline\newline
Finally, for each $\cond\in\Sigma(\Omega),\adve\in B(\Omega),f\in W^{-1,q'}(\Omega),g\in W^{1/q',q}(\partial\Omega)$ let $\tilde{u}_{\cond,\adve,f,g}\in V=W^{1,q}_{0}(\Omega)$ be the unique solution to the variational problem
$$a_{\cond,\adve,f,g}(\tilde{u}, w) = F_{\cond,\adve,f,g}(w)\quad\forall w\in W.$$
At this regard, we notice that $W = W^{1,q'}_{0}(\Omega)$ is reflexive, in fact $1<q<+\infty$ implies $1<q'<+\infty$. Therefore, by Lemma \ref{lemma:abstract-continuity}, we know that $\tilde{u}_{\cond,\adve,f,g}$ exists unique and it depends continuously (by composition) on $(\cond,\adve,f,g)$. Furthermore, as clear from inequality \eqref{eq:abstract-continuity} in Lemma \ref{lemma:abstract-continuity}, the correspondence $(\cond,\adve,f,g)\to\tilde{u}_{\cond,\adve,f,g}$ is Lipschitz continuous on every compact subset of the product space $\Sigma(\Omega)\times B(\Omega)\times W^{-1,q'}(\Omega)\times W^{1/q',q}(\partial\Omega)$. This is easily deduced by the properties of $\mathcal{A}$ and $\mathcal{F}$ as well as by the fact that compactness is preserved under continuous transformations. Finally, we notice that
$$u_{\cond,\adve,f,g}=\tilde{u}_{\cond,\adve,f,g}+Tg.$$
The conclusion follows.\qed}

\begin{lemma}
\label{lemma:positivity}
Consider the context and notation in Lemma \ref{lemma:pde-continuity}. If $g\equiv c\in\mathbb{R}$ and $f > 0$ in the distributional sense, then $u>c$ a.e. in $\Omega$.
\end{lemma}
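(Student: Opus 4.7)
The plan is to reduce to zero boundary data and then use a duality argument against an adjoint problem of the same type, leveraging the fact that $b$ is divergence-free in order to make the principal part self-adjoint (up to transposition of $\sigma$).

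First I would set $v := u - c$. Since $c$ is constant, $\nabla c = 0$ and the trace of $v$ vanishes, so $v \in W^{1,q}_0(\Omega)$ and $v$ satisfies the same variational problem with right-hand side $f$. The target becomes $v > 0$ a.e.

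Next, for every $\phi \in \mathcal{C}^\infty_0(\Omega)$ with $\phi \ge 0$ and $\phi \not\equiv 0$, I would introduce the adjoint problem: find $z = z_\phi \in W^{1,q'}_0(\Omega)$ with
\[
\int_\Omega \sigma^T \nabla z \cdot \nabla w - \int_\Omega (b\cdot\nabla z)\,w = \int_\Omega \phi w \qquad \forall\, w\in W^{1,q}_0(\Omega).
\]
Since $\sigma^T \in \Sigma(\Omega)$ with the same ellipticity constant as $\sigma$ and $-b\in B(\Omega)$ (still divergence-free), Lemma \ref{lemma:pde-continuity} applied with the roles of $q$ and $q'$ interchanged gives existence and uniqueness of $z$; also $z\not\equiv 0$ since $\phi\not\equiv 0$. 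Testing the primal equation with $w=z\in W^{1,q'}_0$ and the adjoint with $w=v\in W^{1,q}_0$, then subtracting and using the divergence-free identity $\int (b\cdot\nabla v)z + \int (b\cdot\nabla z)v = \int b\cdot\nabla(vz) = 0$ (obtained by approximating $vz$ by $\mathcal{C}^\infty_0$ functions and integrating by parts), I arrive at the duality identity
\[
\int_\Omega v\,\phi \;=\; \langle f, z\rangle.
\]
If $z\ge 0$ a.e., then since $f$ is a positive distribution (hence a positive Radon measure by Schwartz's theorem) with full support in $\Omega$, and since $z\not\equiv 0$, the right-hand side is strictly positive. Letting $\phi$ range over non-negative non-trivial $\mathcal{C}^\infty_0$ test functions then yields $v>0$ a.e.

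The main obstacle is thus the adjoint weak maximum principle $z \ge 0$. The natural argument tests the adjoint with $z_- := \max(-z,0)$: the transport term vanishes because $\int (b\cdot\nabla z)z_- = -\tfrac{1}{2}\int b\cdot \nabla(z_-^2)=0$ by the divergence-free property of $b$; the principal term is bounded below by $\varepsilon\int|\nabla z_-|^2$ via uniform ellipticity, exploiting that $\nabla z = -\nabla z_-$ on $\{z<0\}$; and the right-hand side $\int \phi z_-$ is non-negative. Hence $\nabla z_-\equiv 0$, and Poincaré forces $z_-\equiv 0$. The subtle point is admissibility of $z_-$ as a test function, which requires $z_- \in W^{1,q}_0$. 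Since $\nabla z\in L^{q'}$ and $|\nabla z_-|\le |\nabla z|$, this follows whenever $L^{q'}\hookrightarrow L^q$ on the bounded domain $\Omega$, i.e.\ when $q\le q'$ (equivalently $q\le 2$), which covers the Banach cases of interest in the paper (e.g.\ $q=4/3$ in Section \ref{sec:experiments}). In the complementary regime $q>2$ the same scheme can be run directly on the primal equation: $v_-\in W^{1,q}_0 \subset W^{1,q'}_0$ is then an admissible test function, the identical calculation gives $v_-\equiv 0$ so that $v\ge 0$, and strict positivity is recovered by interchanging the roles of primal and adjoint in the duality step above.
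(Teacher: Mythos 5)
Your overall strategy—reduce to zero boundary data, test the primal equation against an adjoint solution, kill the transport term by the divergence-free structure—is the same as the paper's. The paper, however, solves the adjoint problem in $H^{1}_{0}(\Omega)$ (smooth right-hand side $\eta$), invokes the \emph{strong} maximum principle (Theorem 2 of \cite{chicco}) to get $w>0$ pointwise in $\Omega$, and then identifies $w$ with the $W^{1,q'}_0$ solution by uniqueness. You instead prove a \emph{weak} maximum principle by Stampacchia truncation, yielding only $z\ge 0$ a.e.

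That weakening opens a gap in the final step. You need $\langle f,z\rangle>0$, and $z\ge 0$ a.e.\ with $z\not\equiv 0$ does not imply this: a positive distribution $f$ (Schwartz gives a positive Radon measure, not necessarily absolutely continuous) can charge a set on which $z$ vanishes. Your patch—``$f$ has full support''—is not part of the hypothesis, and in the application to Claim 2 the forcing is a Dirac mass, whose support is a single point; so the patch does not hold for the very case the lemma is meant to cover. The paper's strong maximum principle ($w>0$ everywhere in the interior, with $w$ smooth in the interior by elliptic regularity) is exactly what makes $\langle f,w\rangle>0$ come out cleanly against any nonzero positive measure $f$. The cleanest fix to your argument is to do what the paper does: since the adjoint right-hand side $\phi$ is smooth, solve the adjoint in $H^1_0$ and appeal to the classical \emph{strong} maximum principle instead of proving the weak one by truncation.

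A secondary remark on your $q>2$ branch: after running the truncation on the primal to get $v\ge 0$, you say strict positivity ``is recovered by interchanging the roles of primal and adjoint in the duality step.'' But in that regime the adjoint lives in $W^{1,q'}_0$ with $q'<2$ and the truncation argument fails symmetrically, so the interchange does not by itself produce the required positivity of the adjoint solution; one still needs a maximum principle for it. The paper sidesteps the $q$ versus $q'$ bookkeeping entirely by working in $H^1_0$ and then matching the solutions.

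Finally, your sign bookkeeping in the truncation step is correct ($\int\sigma^{T}\nabla z\cdot\nabla z_{-}=-\int\sigma^{T}\nabla z_{-}\cdot\nabla z_{-}\le -\varepsilon\int|\nabla z_{-}|^{2}$, while the right-hand side is $\ge 0$, forcing $z_-\equiv 0$), and your observation that $b$ divergence-free makes $\int(b\cdot\nabla z)z_{-}=0$ is the same cancellation the paper uses for the cross-terms in the duality identity.
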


\noindent\proof{This simply derives from maximum principles. We first prove the case $c=0$. Let $\eta\in\mathcal{C}^{\infty}_{0}(\Omega)$ be such that $\eta>0$ everywhere in $\Omega$. Let $w\in H^{1}_{0}(\Omega)$ be the solution to the following adjoint variational problem:
$$\int_{\Omega}\cond^{T}\nabla w\cdot\nabla v - \int_{\Omega}(\adve\cdot\nabla w)v = \int_{\Omega}\eta v\quad\forall v\in\mathcal{C}^{\infty}_{0}(\Omega).$$
Within this regular case, the classical maximum principle states $w>\max w_{|\Omega} = 0$ in $\Omega$, see e.g. Theorem 2 in \cite{chicco}. Now we notice that $w\in  W^{1,q'}_{0}(\Omega)$, as the PDE also admits a unique solution in that space. Thus, by density, we are allowed to consider $u$ as test function for $w$ and viceversa. Doing so and subtracting the equations for $u$ and $w$ yields
$$\int_{\Omega}\eta u = \int_{\Omega}fw,$$
since $\cond\nabla u\cdot\nabla w = \cond^{T}\nabla w\cdot \nabla u$ and the advective terms cancel out using the integration by parts formula (recall that $\adve$ is divergence free while both $u$ and $w$ vanish on $\partial\Omega$). The above shows that $\int_{\Omega}\eta u > 0$, as the right hand side is positive by hypothesis. As $\eta$ was arbitrary, we conclude that $u>0$ a.e. in $\Omega$. Let now $c\neq0$. It is elementary to see that $u=c+u_{0}$, where $u_{0}$ solves the variational problem with homogenous boundary conditions. The conclusion follows.\qed}

\bibliographystyle{amsplain}

\end{document}